\documentclass[12pt]{article}

\usepackage{comment}
\usepackage{pgf,tikz}
\usepackage{pgfplots}
\usetikzlibrary{arrows.meta}

\usepackage{amsmath,amssymb,amsthm,cite}
\usepackage[colorlinks,citecolor=red,urlcolor=blue,linkcolor=blue]{hyperref}
\usepackage{mathrsfs,dsfont}
\usepackage{graphicx}
\usepackage{enumitem}

\parindent=2ex
\parskip=1.7ex

\setlength{\topmargin}{-1.2cm} 
\setlength{\textwidth}{6.25in}
\setlength{\oddsidemargin}{0.125in} 
\setlength{\textheight}{22cm}

\newtheorem{nummer}{ }
\newtheorem{thm}[nummer]{\bf Theorem}

\newtheorem{lem}[nummer]{\bf Lemma}
\newtheorem{cor}[nummer]{\bf Corollary}
\newtheorem{fct}[nummer]{\bf Fact}

\newcommand\nT{\mathscr{T}}

\newcommand{\ie} {\sl i.e.}

\newcommand{\Q}{\mathds{Q}}

\newcommand{\Z}{\mathds{Z}}

\newcommand{\F}{\mathds{F}}

\newcommand{\THM}{Theorem}
\newcommand{\FCT}{Fact}
\newcommand{\LEM}{Lemma}

\newcommand{\nO}{\mathscr{O}}

\newcommand{\ta}{\tilde a}
\newcommand{\tb}{\tilde b}

\newcommand{\dritt}[2]{#1\mathbin{\#}#2}

\newcommand{\C}{\Gamma}
\newcommand{\Cabc}{\C_{\alpha,\beta,\gamma}}
\newcommand{\Cabcd}{\C_{\alpha,\beta,\gamma,\delta}}
\newcommand{\Cab}{\C_{a,b}}
\newcommand{\Cuv}{\C_{u,v}}
\newcommand{\Ct}{\C_{t}}

\newcommand{\konj}[1]{\bar{#1}}

\makeatletter
\def\opargproof[#1]{\par\noindent {\bf #1 }}
 \makeatother

\definecolor{darkgreen}{rgb}{0,.6,0}


\begin{document}
\begin{center}
{\Large\bf A Geometric Approach to Elliptic Curves with}\\[1.8ex] 
{\Large\bf Torsion Groups $\mathbf{\Z/10\Z}$, 
$\mathbf{\Z/12\Z}$, $\mathbf{\Z/14\Z}$, and $\mathbf{\Z/16\Z}$}

\medskip
{\small Lorenz Halbeisen}\\[1.2ex] 
{\scriptsize Department of Mathematics, ETH Zentrum,
R\"amistrasse\;101, 8092 Z\"urich, Switzerland\\ lorenz.halbeisen@math.ethz.ch}\\[1.8ex]
{\small Norbert Hungerb\"uhler}\\[1.2ex] 
{\scriptsize Department of Mathematics, ETH Zentrum,
R\"amistrasse\;101, 8092 Z\"urich, Switzerland\\ norbert.hungerbuehler@math.ethz.ch}\\[1.8ex]
{\small Maksym Voznyy}\\[1.2ex] 
{\scriptsize Department of Technology, Stephen Leacock CI,
Toronto District School Board, Toronto, Canada\\ maksym.voznyy@tdsb.on.ca}\\[1.8ex]
{\small Arman Shamsi Zargar}\\[1.2ex] 
{\scriptsize Department of Mathematics and Applications,
University of Mohaghegh Ardabili, Ardabil, Iran\\ zargar@uma.ac.ir}

\end{center}

\hspace{5ex}{\small{\it key-words\/}: elliptic curve, parametrisation, rank, quadratic field, torsion group}

\hspace{5ex}{\small{\it 2020 Mathematics Subject 
Classification\/}: {\bf 11G05}, 14H52

\begin{abstract}\noindent
{\small We give new parametrisations of elliptic curves in 
Weierstrass normal form $y^2=x^3+ax^2+bx$ with torsion 
groups $\Z/10\Z$ and $\Z/12\Z$ over $\Q$, 
and with $\Z/14\Z$ and $\Z/16\Z$ over quadratic fields.
Even though the parametrisations are equivalent to those given by 
Kubert and Rabarison, respectively, 
with the new parametrisations 
we found three infinite families of elliptic 
curves with torsion group $\Z/12\Z$ and positive rank. Furthermore,
we found elliptic curves with torsion group $\Z/14\Z$ 
and rank~$3$\,--\,which is a new record for such curves,
as well as some new elliptic 
curves with torsion group $\Z/16\Z$ 
and rank~$3$.}
\end{abstract}

\section{Introduction}

An elliptic curve $E$ over a field $K$ is a smooth projective 
curve of genus~$1$ equipped 
with a $K$-rational point. When embedded in the affine plane, 
$E$ is described by the Weierstrass model
$y^2+a_1xy+a_3y=x^3+a_2x^2+a_4x+a_6$, where the coefficients belong to~$K$. 
Elliptic curves can be represented by several other equations. 
The interested reader may consult \cite[Ch.\;2]{Washington}. 
In the few past decades, many alternative equations describing 
$E$ have been introduced in the context of cryptographic applications.

Given an elliptic curve $E$ defined over a field~$K$, the 
Mordell-Weil theorem shows that as an abelian group $E(K)$ 
is finitely generated over a number field.
In particular, $E(K) \cong \mathcal{T} \times \mathbb{Z}^r$, 
where the torsion group $\mathcal{T}$ is finite. 
The non-negative integer $r$ is called the rank. 

Many number theorists have tried to construct families of 
elliptic curves with rank as high as possible. The rank 
of an elliptic curve measures, in some sense, the 
number of rational points on the curve.  
Despite this, there is no known algorithm guaranteed to compute the rank. 

With a geometric approach developed in~\cite{HHSchroeter}, 
we investigate the rank of elliptic curves
with torsion groups $\Z/10\Z$ and $\Z/12\Z$ over $\Q$, 
and with torsion groups $\Z/14\Z$ and $\Z/16\Z$ over quadratic fields.
In particular, we give new parametrisations of elliptic curves in 
Weierstrass normal form for these curves.
Even though the parametrisations are equivalent to those given by 
Kubert\;\cite{Kubert} and Rabarison\;\cite{Rabarison}, respectively, 
especially the parametrisation of elliptic curves with 
torsion group $\Z/14\Z$ and $\Z/16\Z$ over quadratic fields 
are novel (see \cite{DujellaQuad}).
By our approach, we are able to find parametrisations of 
elliptic curves with torsion groups 
$\Z/10\Z$ and $\Z/12\Z$, 
and we provide three infinite families of 
curves with torsion group $\Z/12\Z$ and positive rank. Furthermore,
we find elliptic curves with torsion group $\Z/14\Z$ 
and rank~$3$\,--\,which is a new record for such curves,
as well as some new elliptic 
curves with torsion group $\Z/16\Z$ and rank~$3$.
Consult~\cite{dujella-web,DujellaQuad} for the current records 
on the rank of elliptic curves (with prescribed torsion groups)
over the rational and quadratic fields.

\section{A Geometric Approach to Elliptic Curves}

In this section we present a geometric approach to elliptic curves
with torsion groups $\Z/2n\Z$ over arbitrary fields. The approach 
is bases on a ruler construction of cubic curves due to 
Schroeter~\cite{Schroeter}, which was further developed and 
applied to elliptic curves in~\cite{HHSchroeter}.

Let $\F$ be a finite field extension of $\Q$ and 
let $$\Cab:\;y^2=x^3+ax^2+bx$$ 
with $a,b\in\F$ be a cubic curve. Furthermore, let $T:=(0,0)$. Then $T$
belongs to~$\Cab$ and $T+T=\nO$, where $\nO$ denotes the neutral element
of the Mordell-Weil group of $\Cab$ and ``$+$'' is the group operation. 
If $A$ is a point on~$\Cab$, then we call
the point $\konj{A}:=T+A$ the \hbox{\it conjugate of\/ $A$.}
Since $T+T=\nO$, we have $$\konj{\konj{A}}=T+\konj{A}=T+T+A=\nO+A=A\,.$$
Furthermore, for points $A,B$ on an elliptic curve $\C$ (over $\F$), 
let $$\dritt AB:=-(A+B)\,.$$
In particular, if $C=\dritt AA$, then the line through $C$ 
and $A$ is tangent to $\Cab$ with contact point~$A$.

The following fact gives a connection between conjugate points and
tangents (see Figure~\ref{bild:fact1}).

\begin{fct}\label{fct:tangent}
If\/ $A,\konj{A},B$ are three points on\/ $\Cab$ which lie on a straight
line, then\/ $\dritt AA=\konj{B}$.
\end{fct}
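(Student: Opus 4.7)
The plan is to reduce the statement to a straightforward computation inside the Mordell-Weil group of $\Cab$, using the defining property of the chord-and-tangent group law: three points of an elliptic curve that lie on a common line sum to the neutral element $\nO$.

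First I would apply this collinearity principle to the hypothesis. Since $A,\konj{A},B$ lie on a line, we have
\[
A+\konj{A}+B=\nO,
\]
so $B=-(A+\konj{A})$. Next I would substitute the definition $\konj{A}=T+A$ to obtain
\[
B=-(A+T+A)=-(2A+T).
\]

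The remaining step is to compute the conjugate of $B$ and compare it with $\dritt AA$. Using $2T=\nO$,
\[
\konj{B}=T+B=T-2A-T=-2A,
\]
while by definition
\[
\dritt AA=-(A+A)=-2A.
\]
Hence $\dritt AA=\konj{B}$, as claimed.

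There is essentially no obstacle here beyond trusting the chord-tangent description of addition: once one translates "collinear" into "sums to $\nO$", the rest is algebra in an abelian group together with the fact that $T$ is an element of order~$2$. The only mild subtlety is bookkeeping the signs of conjugation versus inversion, but since $T=-T$ the two operations commute cleanly and the identification $\dritt AA=\konj B=-2A$ is immediate.
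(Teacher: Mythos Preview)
Your proof is correct and follows essentially the same route as the paper: both arguments translate the collinearity of $A,\konj{A},B$ into $A+\konj{A}+B=\nO$, expand $\konj{A}=T+A$, and then use $2T=\nO$ to identify $\dritt AA=-2A$ with $\konj{B}$. The only cosmetic difference is that the paper isolates $A+A$ and shows it equals $-\konj{B}$, whereas you isolate $\konj{B}$ and show it equals $-2A$.
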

\begin{proof} If $A,\konj{A},B$ are three points on $\Cab$ on a straight
line, then $A+\konj{A}=-B$. Thus, $A+T+A=T+A+A=-B$, which implies
$$A+A=T+(T+A+A)=T+(-B)=(-T)+(-B)=-(T+B)=-\konj{B}\,,$$ and therefore,
the line $A\konj{B}$ is tangent to $\Cab$ with contact point~$A$, {\ie},
$\dritt AA=\konj{B}$.
\end{proof}
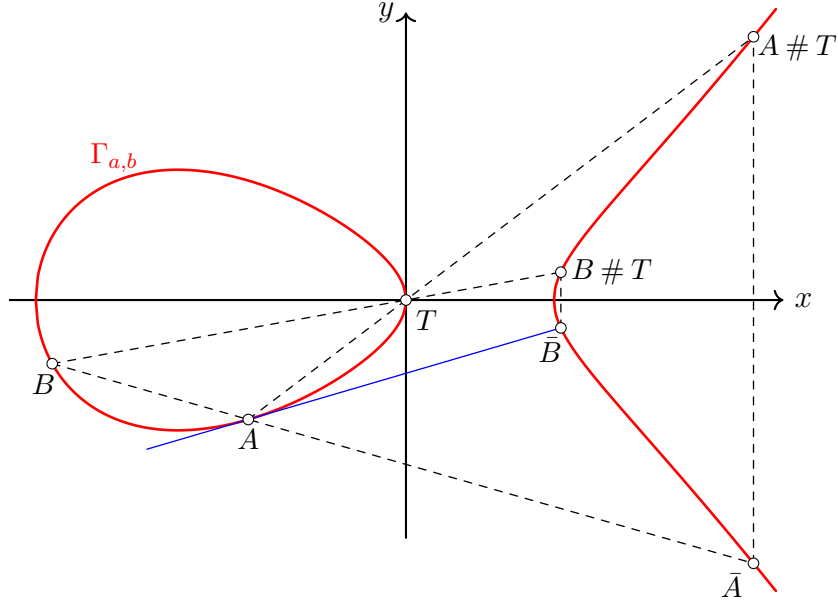
\begin{figure}[h!]
\begin{center}
\begin{tikzpicture}[line cap=round,x=28,y=9]
\clip(-5.35,-13.5) rectangle (6.3,13.);
\draw[-{>[length=1.3mm]},line width=.8pt] (-10,0)--(5.1,0) node[right] {$x$};
\draw[-{>[length=1.3mm]},line width=.8pt] (0,-10)--(0,12.1)node[left] {$y$};

\draw[line width=1.pt,color=red,smooth,samples=200,domain=-5:-0] plot( \x, {sqrt( (\x)^3 + 3*(\x)^2 - 10*\x) } );
\draw[line width=1.pt,color=red,smooth,samples=200,domain=-5:-0] plot( \x, {-sqrt( (\x)^3 + 3*(\x)^2 - 10*\x)} );
\draw[line width=1.pt,color=red,smooth,samples=200,domain=2:5] plot( \x, {sqrt( (\x)^3 + 3*(\x)^2 - 10*\x) } );
\draw[line width=1.pt,color=red,smooth,samples=200,domain=2:5] plot( \x, {-sqrt( (\x)^3 + 3*(\x)^2 - 10*\x) } );

\draw[line width=.5pt,dashed,domain=-4.7789:4.69484] plot( \x, { -6.91296 - 0.88653 *\x} );
\draw[line width=.5pt,domain=-3.5:2.09253,blue] plot( \x, {-3.08117 + 0.912432*\x} );
\draw[line width=.5pt,dashed,domain=-2.13:4.69484] plot( \x, {2.35899*\x} );
\draw[line width=.5pt,dashed,domain=-4.7789:2.09253] plot( \x, {0.560028*\x} );
\draw[line width=.5pt,dashed] (4.69484,-11.0751)--(4.69484,11.0751);
\draw[line width=.5pt,dashed] (2.09253, -1.17188)--(2.09253, 1.17188);

\begin{small}
\draw [fill=white] (0,0) circle (2.pt);
\draw [fill=white] (-2.13,-5.02465) circle (2.pt);
\draw [fill=white] (4.69484, -11.0751) circle (2.pt);
\draw [fill=white] (4.69484, 11.0751) circle (2.pt);
\draw [fill=white] (-4.7789, -2.67632) circle (2.pt);
\draw [fill=white] (2.09253, -1.17188) circle (2.pt);
\draw [fill=white] (2.09253, 1.17188) circle (2.pt);

\draw[color=black] (0,0) node[anchor=north west] {$T$};
\draw[color=black] (-2.13,-5.02465) node[anchor=north ] {$A$};
\draw[color=black] (4.4, -11.0751) node[anchor= north] {$\bar A$};
\draw[color=black] (4.61, 11.6) node[anchor= north west] {$ \dritt AT$};
\draw[color=black] (-4.6, -2.67632) node[anchor=north east] {$B$};
\draw[color=black] (1.95, -1.17188) node[anchor=north ] {$\bar B$};
\draw[color=black] (2.09253, 1.17188) node[anchor=west ] {$\dritt BT$};
\draw[color=red] (-4.4,6) node[anchor=west ] {$\Gamma_{a,b}$};

\end{small}
\end{tikzpicture}
\caption{Conjugate points and tangents.}\label{bild:fact1}
\end{center}
\end{figure}
In homogeneous coordinates, the curve $y^2=x^3+ax^2+bx$ becomes
$$\C:\;Y^2 Z=X^3+aX^2 Z+bX Z^2\,.$$ 
Assume now that $\tilde{A}=(x_0,y_0,1)$ 
is a point on the cubic $\C$, where $x_0,y_0\in\F$ and $y_0\neq 0$. 
Then the point $(1,1,1)$ is on the curve
$$y_0^2\,Y^2 Z=x_0^3\,X^3+a\,x_0^2\,X^2 Z+b\,x_0\,X Z^2\,.$$ Now, 
by exchanging $X$ and $Z$ ({\ie},
$(X,Y,Z)\mapsto (Z,Y,X)$), dehomogenising with respect to the third 
coordinate ({\ie}, $(Z,Y,X)\mapsto (Z/X,\,Y/X,\,1)$), and multiplying
with $1/y_0^2$, we obtain that the point $A=(1,1)$ is on the 
curve $$\Cabc:\;y^2 x=\alpha+\beta x+\gamma x^2\,,$$ where
$\alpha,\beta,\gamma\in\F$.
Notice that since $A=(1,1)$ is on $\Cabc$, we have $\alpha+\beta+\gamma=1$. 
We denote this projective transformation which maps $\Gamma_{a,b}$ to
$\Gamma_{\alpha,\beta,\gamma}$ and $\tilde A$ to $A$ by $\Phi$.

In homogeneous coordinates, the neutral element of $\Cabc$ is 
$\nO=(0,1,0)$, and the image under $\Phi$ of the point $(0,0,1)$ on $\Cab$ is
$T=(1,0,0)$. With respect to the curve $\Cabc$, we obtain that the conjugate
$\konj{P}$ of a point $P=(x_0,y_0)$ on $\Cabc$ is of the form
$\konj{P}=(x_1,-y_0)$ for some $x_1\in\F$.

A generalisation of these observations is given by the following:

\begin{lem}\label{lem:trans}
Let\/ $\tilde{A}_0=(x_0,y_0)$ be a point on the 
cubic\/ $\Cab:y^2=x^3+ax^2+bx$, where\/ $x_0,y_0,a,b\in\F$ and\/ $y_0\neq 0$.
Then there exists an $\F$-projective transformation\/ $\Phi$ 
which maps the curve\/ $\Cab$ to the curve\/ 
$$\Cabcd:\;y^2 (x-\delta)=\alpha+\beta x+\gamma x^2
\qquad\text{(with\/ $\alpha,\beta,\gamma,\delta\in\F$),}$$ 
and the point\/ $\tilde{A}_0$ to~$A_0=(1,1)$. Moreover, we can require that\/
$\konj{A}_0=(-1,-1)$.
\end{lem}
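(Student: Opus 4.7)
My strategy is to compose the projective transformation $\Phi$ already constructed just before the lemma with a further linear rescaling of the $x$-coordinate that normalises the conjugate point.

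First, I would apply $\Phi$ to send $\Cab$ to $\Cabc:\,y^2 x=\alpha+\beta x+\gamma x^2$ and $\tilde{A}_0$ to $A_0=(1,1)$. As noted in the paragraph preceding the lemma, the conjugation rule on $\Cabc$ fixes the $y$-coordinate up to sign, so $\konj{A}_0=(x_1,-1)$ for some $x_1\in\F$.

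Next, I would compose with the $\F$-linear substitution $x=\lambda X+\mu$ (leaving $y$ unchanged), choosing $\lambda,\mu\in\F$ so that $x=1$ corresponds to $X=1$ and $x=x_1$ corresponds to $X=-1$. The two linear conditions $\lambda+\mu=1$ and $-\lambda+\mu=x_1$ have the unique solution $\lambda=(1-x_1)/2$, $\mu=(1+x_1)/2$, which is well-defined whenever $x_1\neq 1$. Substituting $x=\lambda X+\mu$ into the equation of $\Cabc$ and dividing by $\lambda$ yields
$$y^2\bigl(X-\delta\bigr)=\alpha'+\beta' X+\gamma' X^2,\qquad\text{with }\delta=-\mu/\lambda=(1+x_1)/(x_1-1),$$
and appropriate $\alpha',\beta',\gamma'\in\F$. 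This is precisely the form $\Cabcd$, and by construction the image of $\tilde{A}_0$ is $A_0=(1,1)$ with $\konj{A}_0=(-1,-1)$ on the new curve. Since each step is an $\F$-projective transformation, so is their composition.

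The main obstacle is the degenerate case $x_1=1$. There $A_0$ and $\konj{A}_0$ share an $x$-coordinate, hence $\konj{A}_0=-A_0$; combined with $\konj{A}_0=T+A_0$ this gives $2A_0=-T=T$, so $A_0$ has exact order $4$ with $2A_0=T$. When the lemma is subsequently used to construct curves with torsion group $\Z/2n\Z$ for $n\geq 3$, the starting point $\tilde{A}_0$ is chosen of order strictly greater than $4$, so $x_1\neq 1$ and the composition above delivers the required $\F$-projective transformation.
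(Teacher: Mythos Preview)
Your argument is correct and is exactly the paper's approach made explicit: the paper's ``shifting and stretching the $x$-axis'' is precisely your affine substitution $x=\lambda X+\mu$ with $\lambda=(1-x_1)/2$, $\mu=(1+x_1)/2$, after which the equation takes the form $y^2(X-\delta)=\alpha'+\beta' X+\gamma' X^2$. You are in fact more careful than the paper, since you spell out the coefficients and isolate the degenerate case $x_1=1$ (equivalently $\konj{A}_0=-A_0$, i.e.\ $\tilde A_0$ of order~$4$ with $2\tilde A_0=T$), which the paper's proof passes over in silence but which indeed never arises in the applications to $\Z/2n\Z$ with $n\ge 5$.
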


\begin{proof}
By the above observations there exists an $\F$-projective transformation $\Phi$ 
which maps the curve $\Cab$ to the curve 
$$\tilde{\C}:\;y^2 x=\tilde{\alpha}+\tilde{\beta} x+\tilde{\gamma} x^2\qquad
\text{(with\,\ $\tilde{\alpha},\tilde{\beta},\tilde{\gamma}\in\F$),}$$ 
and the point $\tilde{A}_0$ to~$A_0=(1,1)$. The conjugate $\konj{A}_0$ of $A_0$ 
is of the form $\konj{A}_0=(x_1,-1)$, and by shifting and stretching the $x$-axis,
we obtain the curve 
$$\Cabcd:\;y^2 (x-\delta)=\alpha+\beta x+\gamma x^2\qquad
\text{(with\,\ $\alpha,\beta,\gamma,\delta\in\F$),}$$ 
which contains the points $A_0=(1,1)$ and $\konj{A}_0=(-1,1)$.
\end{proof}

With respect to the curve $\Cabcd$, we can compute the conjugate
of a point by the following:

\begin{fct}\label{fct:konj}
Let $P=(x_0,y_0)$ be a point on $\Cabcd$. Then
$$\konj{P}=\left(\frac{\alpha+\delta(x_0\gamma+\beta)}{\gamma(x_0-\delta)},\,-y_0\right).$$
\end{fct}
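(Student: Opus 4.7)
The approach is to use the chord--tangent description of the group law: $\konj P = T + P = -R$, where $R$ is the third intersection of the line through $T$ and $P$ with $\Cabcd$. First I would locate $T$ and $\nO$ on $\Cabcd$. Tracing the transformation $\Phi$ and the subsequent $x$-shift/stretch used in the proof of Lemma~3, the $2$-torsion point $T=(0,0)$ of $\Cab$ is sent to $T=(1,0,0)$, the point at infinity in the $x$-direction, while $\nO$ remains $(0,1,0)$, the point at infinity in the $y$-direction. Because $\nO$ lies at infinity in the $y$-direction, negation on $\Cabcd$ is still reflection across the $x$-axis, so $\konj P$ automatically has $y$-coordinate $-y_0$ (a fact already recorded on $\Cabc$ before Lemma~3 and preserved under the $x$-shift/stretch). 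It therefore remains to compute the first coordinate $x_1$.

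Since $T$ is the point at infinity in the $x$-direction, the line $TP$ is the horizontal line $y=y_0$. Substituting $y=y_0$ into the equation of $\Cabcd$ produces the quadratic
\[
\gamma\,x^{2}+(\beta-y_0^{2})\,x+(\alpha+y_0^{2}\,\delta)\;=\;0,
\]
whose two finite roots are $x_0$ (from $P$) and the $x$-coordinate $x_R$ of $R$; the third intersection of the horizontal line with the cubic is $T$ itself. Since $\konj P=-R$ has the same $x$-coordinate as $R$, Vieta's product-of-roots formula yields
\[
x_1\;=\;\frac{\alpha+y_0^{2}\,\delta}{\gamma\,x_0}.
\]

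The final step is to eliminate $y_0^{2}$ via the defining relation $y_0^{2}(x_0-\delta)=\alpha+\beta x_0+\gamma x_0^{2}$. Substituting $y_0^{2}=(\alpha+\beta x_0+\gamma x_0^{2})/(x_0-\delta)$ into the numerator, the $\alpha\delta$-contributions cancel, a factor $x_0$ is freed, and cancelling it against $x_0$ in the denominator leaves the expression $\bigl(\alpha+\delta(\gamma x_0+\beta)\bigr)\big/\bigl(\gamma(x_0-\delta)\bigr)$ claimed by the fact. I do not anticipate a real obstacle here; the only points requiring care are to remember that $T$ itself accounts for one of the three intersections on the line $y=y_0$ (so Vieta applies to the two \emph{finite} roots), and to distinguish $x_0$ from $x_R$ in that quadratic.
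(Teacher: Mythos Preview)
Your proposal is correct and follows essentially the same route as the paper. Both arguments substitute $y^2=y_0^2$ into the defining equation of $\Cabcd$ to obtain the quadratic $\gamma x^{2}+(\beta-y_0^{2})x+(\alpha+y_0^{2}\delta)=0$, pick out the second root via Vieta, and then eliminate $y_0^{2}$ using the curve equation to reach the stated formula. The only difference is one of presentation: you spell out the chord--tangent geometry (locating $T=(1,0,0)$, taking the horizontal line $y=y_0$, identifying the third intersection $R$, and negating) to justify why the ``other'' root is the $x$-coordinate of $\konj P$, whereas the paper simply invokes the earlier remark that $\konj P$ has second coordinate $-y_0$ and takes the other root without further comment.
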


\begin{proof}
Let $P=(x_0,y_0)$ be a point on $\Cabcd$. Then
$$y_0^2(x_0-\delta)=\alpha+\beta x_0+\gamma x_0^2\,,$$ which implies that
$x_0$ is a root of $$x^2\gamma+x(\beta-y_0^2)+(\alpha-\delta y_0^2)\,,$$
and since the other root is $\frac{\alpha+\delta(x_0\gamma+\beta)}{\gamma(x_0-\delta)}$, we obtain
$\konj{P}=\left(\frac{\alpha+\delta(x_0\gamma+\beta)}{\gamma(x_0-\delta)},\,-y_0\right)$.
\end{proof}


Let $\Cab:\;y^2=x^3+a x^2+bx$ be a regular curve over some field $\F$ with torsion group $\Z/2n\Z$ 
(for some $n\ge 5$).
Each element of the group $\Z/2n\Z=\{0,1,\ldots,2n-1\}$ corresponds to a point on
$\Cab$. Let $\tilde{T}$ be the unique point of order~$2$. Then $\tilde{T}$ corresponds to~$n$.
Furthermore, let $A'$ be a point on $\Cab$ which corresponds
to~$1$. Then $A'$ is of order~$2n$. Finally, let $B'$
be the point on $\Cab$ which corresponds to~$2$. 
Then $A'+A'=B'$. 
Now, by {\LEM}\;\ref{lem:trans}, there is a
projective transformation $\Phi$ which maps the curve $\Cab$ to the curve $\Cabcd$, 
the point $A'$ to the point $A=(1,1)$, and the point $\konj{A'}$ to the 
point $\konj{A}=(-1,-1)$. Moreover, since 
$A+\konj{A}$ corresponds to~$1+(n+1)=n+2$, we obtain that $A+\konj{A}=\konj{B}$.
In other words, $\dritt{A}{\konj{A}}=-\konj{B}$, which implies that 
$-\konj{B}$ is on the line $A\konj{A}$. Hence, $-\konj{B}=(u,u)$ for some $u\in\F$,
and therefore $B=(v,u)$ for some $v\in\F$.

Since the points $A,\konj{A},B,\konj{B}$ belong to the curve $\Cabcd$, we obtain
$$\alpha=-u\,,\quad \beta=1\,,\quad 
\gamma=\frac{u^2-1}{u+v}\,,\quad 
\delta=u-\gamma\,,\quad
v=\frac{u^2-u\gamma-1}{\gamma}\,.$$

For $u,v\in\F$, let $$\Cuv:\ y^2\left(x-u+\frac{u^2-1}{u+v}\right)
=-u+x+\frac{u^2-1}{u+v}\,x^2\,.$$

By applying $\Phi^{-1}$ to the curve $\Cuv$, we obtain the curve $\Cab$ with
$$a=-2+3u^2+2u^3v+v^2\qquad \text{and}\qquad b=(u^2-1)^3(v^2-1)\,.$$

In the following sections we shall apply this approach to elliptic curves with torsion groups
$\Z/2n\Z$ for $n=5,6,7,8$.


\section{Elliptic Curves with Torsion Group $\boldsymbol{\Z/10\Z}$}

To warm up, we give a parametrisation of elliptic curves with torsion group
$\Z/10\Z$. 

Let $\Cab:\;y^2=x^3+a x^2+bx$ be a regular curve with torsion group $\Z/10\Z$
over~$\Q$.
Each element of the group $\Z/10\Z=\{0,1,\ldots,9\}$ corresponds to a rational point on
$\Cab$. Let $\tilde{T}$ be the unique point of order~$2$. Then $\tilde{T}$ correspond to~$5$.
Furthermore, let $\tilde{A}$ and $\tilde{B}$ be the rational points on $\Cab$ which correspond
to~$1$ and~$2$, respectively. Then $\tilde{A}$ is of order~$10$ and $\tilde{B}$ is of order~$5$. 
Finally, let $\Phi$ be a projective transformation $\Phi$ which maps the curve $\Cab$ to 
the curve $\Cabcd$, the point $\tilde{A}$ to the point $A=(1,1)$, and the conjugate of $\tilde{A}$
to the point $\konj{A}=(-1,-1)$. 
Let $B:=\Phi(\tilde{B})$ and $T:=\Phi(\tilde{T})$. Then, for $A,-A,\konj{A},\ldots$ we
obtain the following correspondence between these points on $\Cabcd$ and the elements of
the group $\Z/10\Z$:

\begin{center}
\begin{tabular}{l p{.7cm} p{.7cm} p{.7cm} p{.7cm} p{.7cm} p{.7cm} p{.7cm} p{.7cm} p{.7cm} p{.7cm}}
Elements of $\Z/10\Z${\Large${\mathstrut}$}&\hfill$0$\hfill&\hfill$1$\hfill&\hfill$2$\hfill&\hfill$3$\hfill&
\hfill$4$\hfill&\hfill$5$\hfill&\hfill$6$\hfill&\hfill$7$\hfill&\hfill$8$\hfill&\hfill$9$\hfill\\\hline
Points on $\Cabcd${\Large${\mathstrut}$}&\hfill$\nO$\hfill&\hfill$A$\hfill& 
\hfill$B$\hfill&\hfill$-\konj{B}$\hfill&\hfill$-\konj{A}$\hfill&\hfill$T$\hfill&\hfill$\konj{A}$\hfill&
\hfill$\konj{B}$\hfill&\hfill$-B$\hfill&\hfill$-A$\hfill 
\end{tabular}
\end{center}
\pagebreak

By definition, we have:
\begin{enumerate}[label=(\roman*)]
\item\label{itm:i} The points $A,\konj{A},-\konj{B}$ are collinear.
\item\label{itm:ii} The points $A,B,\konj{B}$ are collinear.
\end{enumerate}

Since $A=(1,1)$ and $\konj{A}=(-1,-1)$, by~\ref{itm:i} we have $-\konj{B}=(u,u)$ for 
some $u\in\Q$, {\ie}, $\konj{B}=(u,-u)$ and $B=(u,v)$. So, by~\ref{itm:ii}, we have 
$$v=\frac{3u-u^2}{u+1}\,,$$ and since $v=\frac{u^2-u\gamma-1}{\gamma}$, we have
$$\gamma=\frac{(u+1)^2(u-1)}{4u}\,.$$

Now, by applying the formulae $a=-2+3u^2+2u^3v+v^2$ and $b=(u^2-1)^3(v^2-1)$
for the parameters of the curve $\Cab$, we obtain the following result.

\begin{thm}\label{thm:mainZ10}
Let\/ $u\in\Q\setminus\{0,\pm 1\}$ and let 
\begin{eqnarray*}
a_1=-2\cdot(1 + 2 u - 5 u^2 - 5 u^4 - 2 u^5 + u^6)\,,&&
b_1=(u^2 - 1)^5\cdot (-1 - 4 u + u^2)\,.
\end{eqnarray*}
Then, the curve\/ $$\C_{a_1,b_1}:y^2=x^3+a_1 x^2+b_1 x$$
is an elliptic curve with torsion group\/ $\Z/10\Z$. Conversely, if\/ 
$\Cab$ is a regular elliptic curve with torsion group\/ $\Z/10\Z$, then there
exists a\/ $u\in\Q$ such that\/ $\Cab$ is isomorphic
to\/ $\C_{a_1,b_1}$. 
\end{thm}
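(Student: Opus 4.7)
The plan is to verify both directions using the machinery of Section~2; most of the conceptual work has already been carried out in the paragraphs preceding the theorem, and what remains is to pin down the algebraic identities and the regularity check.

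For the forward direction I would proceed in three steps. First, I would substitute $v=(3u-u^2)/(u+1)$ into the general formulas $a=-2+3u^2+2u^3v+v^2$ and $b=(u^2-1)^3(v^2-1)$ derived at the end of Section~2, and verify by direct algebra that the results simplify to the stated expressions for $a_1$ and $b_1$. Second, I would check that $\C_{a_1,b_1}$ is non-singular for $u\in\Q\setminus\{0,\pm 1\}$ by verifying $b_1\neq 0$ and $a_1^2-4b_1\neq 0$; the factor $(u^2-1)^5$ in $b_1$ handles $u=\pm 1$, and the remaining polynomial factor should turn out to have no rational roots outside $u=0$. Third, I would show that the point $A=(1,1)$ on $\Cuv$, and hence its pullback to $\C_{a_1,b_1}$ under $\Phi^{-1}$, has order exactly~$10$. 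This is forced by the geometric setup: by Fact~\ref{fct:tangent}, collinearity condition~(i) yields $A+A=B$, while collinearity condition~(ii) says $A+B+\konj B=\nO$, so $3A=-\konj B=-T-B=-T-2A$, whence $5A=T$ and $10A=2T=\nO$. Since $T\neq\nO$ (the point $(0,0)$ is a non-trivial $2$-torsion point on $\C_{a_1,b_1}$) and $A\neq\nO,T$, the order of $A$ is exactly~$10$. Mazur's theorem then rules out $\Z/20\Z$ and $\Z/2\Z\times\Z/10\Z$ as possible torsion groups over $\Q$, forcing the torsion of $\C_{a_1,b_1}(\Q)$ to be exactly $\Z/10\Z$.

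For the converse, the paragraphs preceding the theorem already contain the argument. Given a regular elliptic curve $\Cab/\Q$ with torsion group $\Z/10\Z$, Lemma~\ref{lem:trans} provides a $\Q$-projective transformation $\Phi$ mapping $\Cab$ to some $\Cabcd$ with $\Phi(\tilde A)=A=(1,1)$ and $\Phi(\konj{\tilde A})=\konj A=(-1,-1)$. Rationality of the torsion points $\tilde A,\tilde B,\tilde T$ forces $u\in\Q$, and conditions~(i) and~(ii) pin down $v=(3u-u^2)/(u+1)$. Consequently $\Cabcd=\Cuv$, and applying $\Phi^{-1}$ realises $\Cab$ as $\C_{a_1,b_1}$ for this~$u$.

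The main obstacle I expect is the bookkeeping of the polynomial identities. Substituting $v=v(u)$ into the formulas for $a$ and $b$, clearing denominators, and checking the simplification to the specific polynomial form of $a_1, b_1$ involves rational functions of moderate degree, and one wants the discriminant factored cleanly enough to isolate the exceptional values of~$u$. A computer algebra system handles this without difficulty; the conceptual content lies entirely in the geometric setup that has already been laid down.
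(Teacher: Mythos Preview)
Your proposal is correct and follows the same route as the paper: the paper does not give a separate proof block for this theorem but regards the preceding paragraphs (deriving $v=(3u-u^2)/(u+1)$ from conditions~(i) and~(ii) and then substituting into the formulae for $a,b$) as the argument, exactly as you describe. Your additions---the explicit regularity check via $b_1\neq 0$ and $a_1^2-4b_1\neq 0$, the verification that $A$ has order precisely~$10$ from the collinearities, and the appeal to Mazur's theorem to exclude larger torsion---are details the paper leaves implicit, and they are the right details to supply.
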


\noindent{\bf Remarks.}
\begin{itemize}
\item In~\cite[Table\;3,\,p.\,217]{Kubert}, Kubert gives the following parametrisation of 
curves of the form $$y^2+(1-c)xy-by=x^3-bx^2$$ with torsion group $\Z/10\Z$
(see also Kulesz~\cite[p.\,341,(1.1.9)]{Kulesz}, who found Kubert's pa\-ra\-me\-tri\-zation in a different way):
$$\tau=\frac pq\,,\quad d=\frac{\tau^2}{\tau-(\tau-1)^2}\,,\quad c=\tau(d-1)\,,\quad b=cd\,.$$

\noindent After transforming Kubert's curve into the form
$$y^2=x^3+\tilde{a}x^2+\tilde{b}x\,,$$
we find $$\tilde{a}=
-(2 p^2 - 2 p q + q^2) (4 p^4 - 12 p^3 q + 6 p^2 q^2 + 2 p q^3 - 
   q^4)\,,\qquad\tilde{b}= 16 p^5 (p - q)^5 (p^2 - 3 p q + q^2)\,.$$
Now, by substituting in $\tilde{a}$ and $\tilde{b}$ the values $p$ 
and $q$ with $p+q$ and $2q$, respectively, and setting $u=\frac pq$,
we obtain $4a_1$ and $16b_1$, respectively, which shows that 
the two parametrisations are equivalent.

\item 
Recall that the Calkin-Wilf sequence
$$
s_1=1, \quad s_{n+1}=\frac{1}{2\lfloor s_n\rfloor-s_n+1}
$$
lists every positive rational number exactly once.
By checking the first 22\,000 fractions in this sequence
we found, with the help of {\tt MAGMA}, $46$~elliptic curves 
with torsion group $\Z/10\Z$ and rank~$3$. 

\item The following table gives 
the fractions $p/q$ and their indices in 
the Calkin-Wilf sequence of six of the $25$~known elliptic curves 
with torsion group $\Z/10\Z$ and rank~$4$ (see~\cite{dujella-web}):

\begin{center} 
\begin{tabular}{rrrl}
$p$\hspace*{1.5ex}{\,} & $q$\hspace*{1.5ex}{\,} & Calkin-Wilf index & discovered by\\
\hline
$2244$\LARGE{\mathstrut} & $1271$ & $307\,485$ & Fisher (2016)\\
$3051$ & $2164$ & $623\,897$           & Fisher (2016)\\
$4777$ & $7725$ & $1\,629\,610$        & Fisher (2016)\\
$1333$ & $475$  & $3\,137\,659$        & Dujella (2005)\\
$2407$ & $308$  & $67\,161\,983$       & Dujella (2008)\\
$1564$ & $1991$ & $532\,575\,944\,622$ & Elkies (2006)
\end{tabular}
\end{center}

\item For $p/q = 13360/9499$ (Calkin-Wilf index $15\,352\,857$), we identified a new 
$\Z/10\Z$ curve of conditional rank $4$:
	\begin{align*}
		y^2	&+xy=x^3-37727175946500513344407792867239647500428495062395\,x\\
			&+89192287551119365352685730014597522447709983914961211487019041502289387025
	\end{align*}
{\tt MAGMA} computations reveal three generators on the curve and confirm that its root number is $1$, 
therefore the rank should be even. The point search up to height $2^{38}$ on each of the $256$ $4$-coverings 
for each of the $4$ curves in the isogeny class did not uncover the missing last generator. We leave it as 
an open challenge to test new descent methods.
\end{itemize}
 

\section{Elliptic Curves with Torsion Group $\boldsymbol{\Z/12\Z}$}

Let us now consider parametrisations of elliptic curves with torsion group
$\Z/12\Z$. By similar arguments as above, one can show the following result.

\begin{thm}\label{thm:mainZ12}
Let\/ $t\in\Q\setminus\{1\}$ be a positive rational and let 
$$\begin{array}{lcl}
a_1=2\left(3 t^8 + 24 t^6 + 6 t^4 -1\right),&&
b_1=(t^2-1)^6 (1 + 3 t^2)^2\,.
\end{array}$$
Then the curve\/ $$\C_{a_1,b_1}:y^2=x^3+a_1 x^2+b_1 x$$
is an elliptic curve with torsion group\/ $\Z/12\Z$. Conversely, if\/ 
$\Cab$ is a regular elliptic curve with torsion group\/ $\Z/12\Z$, then there
exists a positive rational\/ $t$
such that\/ $\Cab$ is isomorphic to\/ $\C_{a_1,b_1}$. 
\end{thm}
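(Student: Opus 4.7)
The plan is to adapt the derivation of \THM~\ref{thm:mainZ10} \emph{mutatis mutandis}, replacing the condition that forces $B=2A$ to have order $5$ with the one that forces order $6$. Applying \LEM~\ref{lem:trans} to a generator $\tilde A$ of the torsion group yields the normalised setup $A=(1,1)$, $\konj{A}=(-1,-1)$ on a curve $\Cabcd$. In $\Z/12\Z$ the labels $1$, $7$ and $4$ (of $A$, $\konj{A}$ and $-\konj{B}$) sum to $12\equiv 0$, so condition~(i) from the previous section still forces $-\konj{B}$ onto the line $A\konj{A}$; hence $-\konj{B}=(u,u)$, $\konj{B}=(u,-u)$, $B=(v,u)$, and the reasoning preceding Section 3 again yields $\alpha=-u$, $\beta=1$, $\gamma=(u^2-1)/(u+v)$, $\delta=u-\gamma$.

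The condition characterising $\Z/12\Z$ is that $B$ has order $6$, i.e.\ $3B=T$. In $\Z/12\Z$ the labels of $2B$ and $-\konj{B}$ both equal $4$, so this is equivalent to $2B=-\konj{B}$, or $\dritt{B}{B}=\konj{B}$: the tangent to $\Cabcd$ at $B$ meets the curve again at $\konj{B}$. Equating the implicit-differentiation slope at $B$ with the slope $2u/(v-u)$ of the chord $B\konj{B}$ and substituting the known values of $\beta,\gamma,\delta$, I expect the clean relation
$$(u^2-1)(v-u)^2 \;=\; 4u^2(v^2-1).$$
Viewed as a quadratic in $v$ with leading coefficient $3u^2+1$, its discriminant simplifies to $\bigl(2u(u^2+1)\bigr)^2$, yielding the two roots $v=-u$ and $v=u(u^2+3)/(3u^2+1)$; the first is excluded because it makes $u+v=0$ (hence $\gamma$ undefined).

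Substituting the remaining value of $v$ into the master formulas $a=-2+3u^2+2u^3v+v^2$ and $b=(u^2-1)^3(v^2-1)$, and exploiting the identity $v^2-1=(u^2-1)^3/(3u^2+1)^2$, one arrives at
$$a=\frac{2\bigl(3u^8+24u^6+6u^4-1\bigr)}{(3u^2+1)^2},\qquad b=\frac{(u^2-1)^6}{(3u^2+1)^2}.$$
The admissible change $(x,y)\mapsto\bigl((3u^2+1)^{-2}x,\,(3u^2+1)^{-3}y\bigr)$ scales $(a,b)$ to $\bigl((3u^2+1)^2 a,\,(3u^2+1)^4 b\bigr)$; setting $t:=u$ recovers exactly $a_1$ and $b_1$. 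For the converse, any regular $\Cab$ with torsion $\Z/12\Z$ furnishes such a generator $\tilde A$, and the same recipe produces a rational $t$; one may restrict to $t>0$ because $a_1,b_1$ depend only on $t^2$, and must exclude $t=1$ because it makes $b_1=0$.

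The step I expect to demand the most care is verifying that the torsion is \emph{exactly} $\Z/12\Z$: the construction places a point $A$ of order dividing~$12$, and the imposed tangency forces $B=2A$ to have order exactly~$6$, whence $A$ has order exactly~$12$; Mazur's classification of $\Q$-rational torsion then leaves no other possibility.
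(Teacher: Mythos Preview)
Your argument is correct and is precisely the ``similar arguments'' the paper gestures at but omits: you replace the collinearity condition~(ii) of the $\Z/10\Z$ case by the tangency $\dritt{B}{B}=\konj{B}$, which encodes $3B=T$, and then feed the resulting $v$ into the master formulas for $a$ and $b$. The two minor slips---the discriminant is $\bigl(4u(u^2+1)\bigr)^2$ rather than $\bigl(2u(u^2+1)\bigr)^2$, and the scaling isomorphism should read $(x,y)\mapsto\bigl((3u^2+1)^{2}x,\,(3u^2+1)^{3}y\bigr)$---do not affect the stated roots $v=-u$, $v=u(u^2+3)/(3u^2+1)$ or the final identification with $a_1,b_1$.
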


In~\cite[Table\;3,\,p.\,217]{Kubert}, Kubert gives the following parametrisation of 
elliptic curves of the form $$y^2+(1-c)xy-by=x^3-bx^2$$ with torsion group $\Z/12\Z$
(see also Kulesz~\cite[p.\,341,(1.1.10)]{Kulesz}, who found Kubert's parametrisation 
in a different way):
\begin{equation}\label{KubertForm}
\tau=\frac rs\,,\quad m=\frac{3\tau-3\tau^2-1}{\tau-1}\,,\quad f=\frac m{1-\tau}\,,\quad
d=m+\tau\,,\quad c=f(d-1)\,,\quad b=cd\,.
\end{equation}

\noindent After transforming Kubert's curve into the form
$$y^2=x^3+\tilde{a}x^2+\tilde{b}x\,,$$
we find 
\begin{align*}
\tilde{a}&=
s^8 + 12 r (r - s)\left(s^6 + 2 r (r - s) (r^2 - r s + s^2) (r^2 - r s + 2 s^2)\right)\,,\\[1.2ex]
\tilde{b}&=16 r^6 (r - s)^6\left(3 r (r - s) + s^2\right)^2\,.
\end{align*}
Now, for $t=\frac rs$ we obtain 
$$\begin{array}{lcl}
a_1:=6r^8 + 48r^6s^2 + 12r^4s^4 - 2s^8\qquad\text{and}\qquad
b_1:=(r^2 - s^2)^6\,(3 r^2 + s^2)^2\,. 
\end{array}$$
Then, by substituting in $\tilde{a}$ and $\tilde{b}$, $r$ with $r+s$ and $s$ with $2s$, 
we obtain $4a$ and $16b$, respectively. This shows that the two elliptic curves
$$\C_{\tilde{a},\tilde{b}}:\ y^2=x^3+\tilde{a}x^2+\tilde{b}x$$ and 
$$\Cab:\ y^2=x^3+ax^2+bx$$ are equivalent.

\subsection{Elliptic Curves of Rank at Least 2}
By checking the first 3441 fractions $r/s$ of the Calkin-Wilf sequence
we found, with the help of {\tt MAGMA}, $125$ fractions which lead to
elliptic curves with torsion group $\Z/12\Z$ and rank~$2$,
and among these 3441 fractions, we even found some which lead to
curves with rank~$3$.
%

As a matter of fact, we would like to mention that until today (November\;2021),
up to isomorphisms only one elliptic curve of rank~$4$ is known, namely 
\begin{multline*}
y^2 + xy = x^3 - 4422329901784763147754792226039053294186858800\,x\\ 
         + 98943710602886706347390586357680210847183616798063680624530387016000
\end{multline*}   
discovered by Fisher in~2008 (see~\cite{dujella-web}). 
This curve is isomorphic to 
\begin{multline*}
y^2 = x^3 +588436986469809874425598\,x^2\\
+ 44662083920000859376188675997725867856489478401\,x
\end{multline*}
which is of type $\C_{a,b}$, where $r=726$ and $s=133$ 
(Calkin-Wilf index of $726/133$ is $274\,335$).

\subsection{Families of Elliptic Curves with Positive Rank}

In this section, we construct three infinite families of elliptic curves $\Cab$ with positive
rank. Other such families were found, for example, by Rabarison\;\cite[Thm.\,12]{Rabarison_family},
Kulesz\;\cite[Thm.\,2.12]{Kulesz} (see also \cite[Sec.\,2.12]{Kulesz_family}),  
and by Suyama (see~\cite[p.\,262\,{f}]{Montgomery}).
Although the parametric families of positive rank and torsion group $\Z/12\Z$ 
are not explicitly given in the work of Rabarison, 
they are mentioned on page~17, line~3 of his manuscript 
and on page~90, line~1 of his thesis. 
In fact, the elliptic curves which correspond to our three families 
are, in Cremona's notation, the curves 368d1, 226a1 and~720e2.

Let $t\in\Q\setminus\{-1,0,1\}$. Instead of $\Cab$ we consider 
the equivalent elliptic curve $$\Ct:\ y^2=x^3+
2\left(3 t^8 + 24 t^6 + 6 t^4 -1\right)x^2+(t^2-1)^6 (1 + 3 t^2)^2\,x\,.$$
The finite torsion points of $\Ct$ are given in the following table:

\begin{center}
\begin{tabular}{ccc}
Order & $x$-coordinate & $y$-coordinate\\
\hline
$2$\LARGE{\mathstrut} & $0$ & $0$\\[1ex]
$3$ & $(t^2-1)^4$ & $\pm\,4t^2\, (t^2-1)^4\, (t^2+1)$\\[1ex]
$4$ & $-(t^2-1)^3\, (1+3t^2)$ & $\pm\,8t^3\, (t^2-1)^3\, (1+3t^2)$\\[1ex]
$6$ & $(t^2-1)^2\, (1+3t^2)^2$ & $\pm\,4t^2\, (t^2-1)^2\, (t^2+1)\, (1+3t^2)^2$\\[1ex]
$12$ & $(t^2-1)\, (t+1)^4\, (1+3t^2)^2$ & $\pm\,4t\, (t^2-1)\, (t+1)^4\, (1+t^2)\, (1+3t^2)$\\[1ex]
$12$ & $-(t^2-1)\, (t+1)^4\, (1+3t^2)^2$ & $\pm\,4t\, (t^2-1)\, (t-1)^4\, (1+t^2)\, (1+3t^2)$
\end{tabular}
\end{center}

Now, if we find any additional rational point $P$ on $\Ct$, then the order of $P$
is infinite which implies that the rank of $\Ct$ is positive. On the other hand, 
if we find an infinite family $\nT$ of values for $t$ such that for every $t\in\nT$,
the curve $\Ct$ has an additional point, then $\{\Ct:t\in\nT\}$ is an infinite family 
of elliptic curves with torsion group $\Z/12\Z$ and positive rank.\medskip

\subsubsection{First Family} 

Let $P_1=(x_1,y_1)$ with 
$$x_1=-(t+1)^2(t-1)^6\,.$$
Then $P_1$ is a rational point on $\Ct$ if and only if
$$v^2=-(t^4+8t^3+2t^2+1)\quad\text{for some rational $v$.}$$
This quartic curve has a rational solution $(t,v)=(-1,2)$, hence,
by \cite[p.\,472]{DujellaBook}, 
it is equivalent to the elliptic curve
	\begin{equation*}
		y^2=x^3+x^2-1\,,
	\end{equation*}
	which is a rank-$1$ elliptic curve, where $G_1:=(1,1)$ is a point of
	infinite order. In particular, for all but finitely many $k\in\Z$, 
	for $[k]G_1=(x_k,y_k)$ and $t_k:=(y_k-1)/(2x_k-y_k-1)$, 
	$\C_{t_k}$ is a non-singular curve with torsion
	group $\Z/12\Z$ and positive rank.\smallskip
	
	\noindent{\bf Examples.} For $k=2$ we obtain $[2]G_1=(13/4, -53/8)$
	and $t_2=-61/97$. So, the parameters $a$ and $b$ of $\Cab$ are
	$$a=\frac{23452774585480768}{7837433594376961}\qquad\text{and}\qquad
	b=\frac{14332124021409323029654935699456}{61425365346268570446197767595521}\,,$$
	where $\Cab$ has rank~$2$.
	In order to compute the rank of $\Cab$, it seems to be faster to use Kubert's
	form\;(\ref{KubertForm}) with $\tau=(r+s)/(2s)$ where $t_2=r/s$, which gives us
	$$1-c=\frac{53471797}{47824783}\qquad\text{and}\qquad
	b= -\frac{37072646910}{366481312129}\,.$$
	
	The following table summarizes what we have found with the help of {\tt MAGMA}.
	
	\begin{center} 
		\begin{tabular}{ccccc}
			{\;}\hspace*{2ex}$k$\hspace*{2ex}{\;} & $t_k$ & $1-c$ & $b$ & Rank\\
			\hline
			$-2$\LARGE{\mathstrut}   & $-5$ & $-\tfrac{163}{27}$ & $\tfrac{2470}{81}$ & $1$\\[2.5ex]
			$-1$ & $-1$              & $1$ & $0$ & $\C_{-1}$ is singular\\[2.5ex]
			$2$ & $-\tfrac{61}{97}$          & $\tfrac{53471797}{47824783}$ 
			& $-\tfrac{37072646910}{366481312129}$ & $2$\\[2.5ex]
			$3$ & $-\tfrac{5737}{1921}$      & $-\tfrac{172463134332983}{107840890126669}$ 
			& $\tfrac{5130041565973335306660}{793224637894724969521}$ & $\ge 1$\\[2.5ex]
			$4$ & $-\tfrac{1500953}{1090945}$ & 
			$\tfrac{1763009864383862432547449}{2374469464172162335214805}$
			& $\tfrac{1052634091165646643245217267815958652}{3357046492915255175591448074492123025}$ 
			& $\ge 1$
		\end{tabular}
	\end{center}
{\tt MAGMA} calculations also confirm the rank to be at least 1 for both $k=5$ and $k=6$.

\subsubsection{Second Family} 

Let $P_2=(x_2,y_2)$ with $$x_2=(t+1)^8\,.$$
Then $P_2$ is a rational point on $\Ct$ if and only if
$$v^2=t^4-2t^3+13t^2+4t+4\quad\text{for some rational $v$.}$$ 
This quartic curve has a solution $(t,v)=(0,2)$ and, by \cite[Thm.\,2.17]{Washington}, 
is birationally equivalent to
	\begin{equation*}
		y^2 + xy = x^3 - 5x + 1\,,
	\end{equation*}
	which is, according to {\tt MAGMA}, a rank-$1$ elliptic
	curve, where $G_2:=(0,1)$ is a point of
	infinite order. In particular, for all but finitely many $k\in\Z$, 
	for $[k]G_2=(x_k,y_k)$ and $t_k:=2(x_k+2)/(y_k+1)$, 
	$\C_{t_k}$ is a non-singular curve with torsion
	group $\Z/12\Z$ and positive rank.\smallskip

\vbox{
\noindent{\bf Examples.} With the help of {\tt MAGMA}, we computed the
rank of the curve $\C_{t_k}$ for $k=1,\ldots,8,10$:
		
\begin{center} 
\begin{tabular}{ccccc}
{\;}\hspace*{2ex}$k$\hspace*{2ex}{\;} & $t_k$ & $1-c$ & $b$ & Rank\\
\hline
$1$\LARGE{\mathstrut} & $2$ & $79$ & $390$ & $1$\\[3ex]
$2$ & $\frac{4}{3}$              & $533$ & $\frac{13300}{3}$ & $1$\\[3ex]
$3$ & $\frac{3}{14}$    
& $\frac{7261}{18634}$ 
& $\frac{2331465}{2869636}$ 
& $1$\\[3ex]
$4$ & $\frac{175}{17}$      
& $\frac{395470463}{8381663}$ 
& $\frac{5983231581600}{11256573409}$ 
& $1$\\[3ex]
$5$ & $-\frac{799}{1200}$ 
& $\frac{3553536961599}{3195202399600}$ 
& $-\frac{744762911993283599}{7664651516160480000}$ 
& $1$\\[3ex]
$6$ & $-\frac{43872}{18847}$ 
& $-\frac{2079664621920150527}{4649857101108754273}$ 
& $\frac{15343052413669431178634306400}{5496433301673119911952465089}$ 
& $1$
\end{tabular}
\end{center}
{\tt MAGMA} calculations also confirm that the rank is exactly 1 for $k=7$ and $k=10$, and the rank is at least 1 for $k=8$.}

\subsubsection{Third Family}

Let $P_3=(x_3,y_3)$ with $$x_3=\tfrac{3}{4}(t+1)^4(t-1)^4\,.$$
Then $P_3$ is a rational point on $\Ct$ if and only if
$$v^2=75t^4+66t^2+3\quad\text{for some rational $v$.}$$
This quartic curve has a solution $(t,v)=(1,12)$, hence it is equivalent to an elliptic curve
	\begin{equation*}
		y^2 = x^3 - 147x - 286
	\end{equation*}
	of rank~$1$, generated by the point $G_3=(-5,-18)$ of infinite order, 
	as determined by {\tt MAGMA}. For all but finitely many $k\in\Z$, 
	for $[k]G_3=(x_k,y_k)$ and $t_k:=(y_k-3x_k+3)/(y_k+9x_k+63)$, 
	$\C_{t_k}$ is a non-singular curve with torsion
	group $\Z/12\Z$ and positive rank.

We have computed the rank of the curve $\C_{t_k}$ for $k=-2,2,3,4$:
\begin{center} 
	\begin{tabular}{ccccc}
		{\;}\hspace*{2ex}$k$\hspace*{2ex}{\;} & $t_k$ & $1-c$ & $b$ & Rank\\
		\hline
		$-2$\LARGE{\mathstrut} & $-\frac{1}{11}$ & $\frac{2531}{2376}$ & $-\frac{9455}{156816}$ & $2$\\[2.5ex]
		%
		%
		%
		%
		$2$ & $-\frac{59}{169}$              & $\frac{282022931}{250380936}$ 
		                                     & $-\frac{506936401895}{4823839112976}$ & $2$\\[2.5ex]
		$3$ & $-\frac{9059}{61}$    
		& $-\frac{2502776666788081}{5783947776000}$ 
		& $\frac{102937783902951852766681}{1608862913372160000}$ 
		& $\ge 1$\\[2.5ex]
		$4$ & $\frac{2086379}{6069899}$      
		& $-\frac{58188154268169008260521361}{47961469780361881818624000}$ 
		& $\frac{2186504518566993279256742865370434477481}{579843715590440818015794769800437760000}$ 
		& $\ge 1$
	\end{tabular}
\end{center}


\section{Elliptic Curves with Torsion Group $\boldsymbol{\Z/14\Z}$}

For an elliptic curve over some field $\F$ with torsion group $\Z/14\Z$, 
starting with a value for $u\in\F$, we compute a value 
for $v$, which will lead to a parametrisation of elliptic curves with 
torsion group $\Z/14\Z$.


\begin{thm} Let\/ $\F$ be a field containing\/ $\Q$. Then there exists an
elliptic curve\/ $\Cab$ over\/ $\F$ with torsion group\/ $\Z/14\Z$ if and only if
for some\/ $u\in\F\setminus\{-1,0,1\}$, 
$$\sqrt{\mathstrut 1 - 2 u + u^2 + 4 u^3}$$
belongs to\/ $\F$.
\end{thm}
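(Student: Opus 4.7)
The plan is to apply the general setup at the end of Section~2 to reduce to the two-parameter family $\Cuv$ and then extract the extra algebraic condition on $(u,v)$ forced by torsion $\Z/14\Z$.

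First I write out the correspondence between points on $\Cuv$ and the group $\Z/14\Z=\{0,1,\ldots,13\}$: $A\leftrightarrow 1$, $B\leftrightarrow 2$, $T\leftrightarrow 7$, so $\bar A\leftrightarrow 8$, $\bar B\leftrightarrow 9$, $-\bar B\leftrightarrow 5$, and $C:=A+B\leftrightarrow 3$. The relation $A+\bar A+(-\bar B)=\nO$ (from $1+8+5\equiv 0$) is already built into the parametrization. The new, $\Z/14\Z$-specific relation comes from $2+3+9\equiv 0\pmod{14}$: the three points $B,C,\bar B$ must be collinear on $\Cuv$. Using $C=A+B$, $B=2A$, and $\bar B=B+T$, this collinearity rewrites as $7A=T$, and since $A=(1,1)$ is visibly neither $\nO$ nor $T=(1,0,0)$ nor of order~$7$, it forces $A$ to have order exactly~$14$.

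Next I turn the collinearity of $B,C,\bar B$ into a scalar equation on $u,v$. Vieta's formulas applied to the line through $B=(v,u)$ and $\bar B=(u,-u)$ (slope $-2u/(u-v)$) give the $x$-coordinate of the third intersection with $\Cuv$ as $x_0=\delta+\gamma(u-v)^2/(4u^2)$, and this must equal $x_C=x_{-C}$; Vieta on the line $AB$ (slope $(u-1)/(v-1)$) provides an analogous explicit expression for $x_{-C}$. Equating $x_0=x_{-C}$, substituting $\gamma=(u^2-1)/(u+v)$ and $\delta=u-\gamma$, and clearing denominators yields---after a patient but routine simplification, which is the main obstacle---the quadratic
\begin{equation*}
(u-1)^2\,v^2+2u(u^2+4u-1)\,v+u^2(u^2-6u-3)=0,
\end{equation*}
whose discriminant computes to $16u^2(1-2u+u^2+4u^3)$. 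Hence the quadratic has a solution $v\in\F$ if and only if $\sqrt{1-2u+u^2+4u^3}\in\F$, which is the forward direction. For the converse, given such a $u$, take $v$ to be a root of the quadratic and set $a=-2+3u^2+2u^3v+v^2$, $b=(u^2-1)^3(v^2-1)$; by construction $A$ has order $14$ on $\Cab$, and for a suitable (indeed generic) choice of $u$ the polynomial $x^2+ax+b$ is irreducible over $\F$, so the full torsion group is exactly $\Z/14\Z$.
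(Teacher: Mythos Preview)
Your approach is essentially the same as the paper's: both exploit the collinearity of $B$, $C=A+B$, and $\bar B$ (coming from $2+3+9\equiv 0\pmod{14}$) and arrive at the identical quadratic $(u-1)^2v^2+2u(u^2+4u-1)v+u^2(u^2-6u-3)=0$ with discriminant $16u^2(1-2u+u^2+4u^3)$. The only difference is computational: the paper obtains $-C$ as the line--line intersection $AB\wedge\bar A\bar B$ via cross products in homogeneous coordinates (never touching the curve equation), whereas you use Vieta on the cubic twice; both routes are valid, the paper's being marginally slicker.
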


\begin{proof}
Assume that the curve $\Cabcd$ over $\F$ has torsion group $\Z/14\Z$ and that
the points $A=(1,1)$, $\konj{A}=(-1,-1)$, $B=(v,u)$, $-\konj{B}=(u,u)$ belong 
to $\Cabcd$, where $A$, $\konj{A}$, $B$, $-\konj{B}$ correspond to~$1$, $8$, $2$, $5$, 
respectively. Finally, let $C:=A+B$. Then $C$ corresponds to~$3$. The complete group 
table is given by the following table:

\begin{center}
\begin{tabular}{l p{.7cm} p{.7cm} p{.7cm} p{.7cm} p{.7cm} p{.7cm} p{.7cm} p{.7cm}}
Elements of $\Z/14\Z${\Large${\mathstrut}$}&\hfill$0$\hfill&\hfill$1$\hfill&\hfill$2$\hfill&\hfill$3$\hfill&
\hfill$4$\hfill&\hfill$5$\hfill&\hfill$6$\hfill&\hfill$7$\hfill\\\hline
Points on $\Cabcd${\Large${\mathstrut}$}&\hfill$\nO$\hfill&\hfill$A$\hfill& 
\hfill$B$\hfill&\hfill$C$\hfill&\hfill$-\konj{C}$\hfill&\hfill$-\konj{B}$\hfill&\hfill$-\konj{A}$\hfill&
\hfill$T$\hfill\\[3ex]
Points on $\Cabcd${\Large${\mathstrut}$}
&\hfill$\nO$\hfill&\hfill$-A$\hfill& 
\hfill$-B$\hfill&\hfill$-C$\hfill&\hfill$\konj{C}$\hfill&\hfill$\konj{B}$\hfill&\hfill$\konj{A}$\hfill&
\hfill$T$\hfill\\\hline
Elements of $\Z/14\Z${\Large${\mathstrut}$}
&\hfill$14$\hfill&\hfill$13$\hfill&\hfill$12$\hfill&\hfill$11$\hfill&
\hfill$10$\hfill&\hfill$9$\hfill&\hfill$8$\hfill&\hfill$7$\hfill
\end{tabular}
\end{center}
Since $\dritt B{\konj{B}}=C$, the point $C$ is on the line $g$ passing through $B$ and $\konj{B}$. 
Furthermore, we have $\dritt AB=-C=\dritt{\konj{A}}{\konj{B}}$. In other words,
$-C$ is the intersection point of the lines $AB$ and $\konj{A}\konj{B}$, denoted
$-C=AB\wedge\konj{A}\konj{B}$. Furthermore, we have $-\konj{C}=A\konj{B}\wedge\konj{A}B$.
Notice that the points $C$ and $-\konj{C}$ have the same $y$-coordinate. 

In homogeneous coordinates, we obtain
$$g:\ B\times\konj{B}=(v,u,1)\times (u,-u,1)=(2 u,\,u - v,\,-u^2 - u v)$$
and
$$-C\;=\ (A\times B)\times (\konj{A}\times\konj{B})=
(-3 u - u^2 + v + 3 u v,\,(u-1) (3 u - v),\,(1-u) (u + v)),$$
and therefore $$C=(-3 u - u^2 + v + 3 u v,\,(1-u) (3 u - v),\,(1-u) (u + v))\,.$$ 
Since the point $C$ belongs to $g$, we must have the scalar product $\langle g,C\rangle=0$, {\ie},
$$u^2 (-3 - 6 u + u^2)+2 u (-1 + 4 u + u^2)\,v+ (u-1)^2\,v^2=0\,.$$
Now, for $u\in\F$, this implies that also $$v_{1,2}=
\frac{u\left(1 - 4 u - u^2 \pm 2\sqrt{\mathstrut1 - 2 u + u^2 + 4 u^3}\right)}{(u-1)^2}$$ 
belong to $\F$, and hence $\sqrt{\mathstrut 1 - 2 u + u^2 + 4 u^3}$ belongs to $\F$.

On the other hand, if $\Cab$ is an elliptic curve over $\F$ with torsion group $\Z/14\Z$,
then we can transform this curve (over $\F$) to the curve $\Cabcd$ 
which contains the points $A=(1,1)$, $B=(v,u)$, and $-\konj{B}=(u,u)$ with the 
above properties. In particular, we have that $u,v\in\F$ which implies that
$\sqrt{\mathstrut 1 - 2 u + u^2 + 4 u^3}$ belongs to~$\F$.
\end{proof}

As an immediate consequence we obtain:
\begin{cor}
Let\/ $u\in\Q\setminus\{-1,0,1\}$ and let\/ $d=1 - 2 u + u^2 + 4 u^3$. 
Then there exists an elliptic curve\/ $\Cab$ over\/ $\Q(\sqrt{d})$ 
with torsion group\/ $\Z/14\Z$.
\end{cor}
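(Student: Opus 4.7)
The plan is to apply the preceding theorem directly, taking $\F := \Q(\sqrt{d})$ with $d := 1 - 2u + u^2 + 4u^3$. Since the theorem provides an ``if and only if'' criterion, the entire work is to verify the \emph{sufficient} direction.

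First I would observe that $\F = \Q(\sqrt{d})$ is a field containing $\Q$: if $d$ happens to be a square in $\Q$, then $\F = \Q$; otherwise $\F$ is a genuine quadratic extension of $\Q$. Either way the theorem's blanket hypothesis that ``$\F$ is a field containing $\Q$'' is satisfied.

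Next I would verify that the given rational $u$ plays the role of the parameter ``$u$'' in the theorem. By hypothesis $u \in \Q \setminus \{-1,0,1\}$, and since $\Q \subseteq \F$ this gives $u \in \F \setminus \{-1,0,1\}$. Furthermore, by the very construction of $\F$, the element $\sqrt{d} = \sqrt{\mathstrut 1 - 2u + u^2 + 4u^3}$ belongs to $\F$. These are exactly the two conditions demanded by the theorem's sufficient direction.

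Invoking that direction then delivers an elliptic curve $\Cab$ over $\F = \Q(\sqrt{d})$ with torsion group $\Z/14\Z$, which is the desired conclusion. There is no substantive obstacle here: the corollary is simply a restatement of the theorem for the smallest extension of $\Q$ in which the relevant square root becomes visible. The only bookkeeping point worth flagging is that when $d$ is already a rational square the resulting curve lives over $\Q$ itself, but this case is harmless and still in accordance with the statement, since $\Q = \Q(\sqrt{d})$ in that event.
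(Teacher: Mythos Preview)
Your proposal is correct and follows exactly the approach the paper intends: the paper presents this corollary with the phrase ``As an immediate consequence we obtain'' and gives no further proof, so your argument is simply the explicit unpacking of that immediate deduction from the preceding theorem.
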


\subsection{High Rank Elliptic Curves with Torsion Group $\boldsymbol{\Z/14\Z}$}


By some further calculations, we can slightly simplify the formulae for 
the parameters $a$ and $b$ of the curve $\Cab$. 
For $z=\sqrt{\mathstrut 1 - 2 u + u^2 + 4 u^3}$ we have:
$$a=-2\left(1 - 4u + 2u^2 + 10u^3 - 18u^4 - 10u^6 + 2u^7 + u^8\right)
+ 4u^2\left(1 - 4u - 2u^3 + u^4\right)\,z$$
$$b=(1-u)^7(1+u)^3\left((1+u)(1 - 5u + 6u^2 + 6u^3 - 23u^4 - u^5)
- 4u^2(1 - 4u - u^2)\,z\right)$$

With the help of {\tt{MAGMA}} we found that for each
$u=4,\,4/7,\,4/9,\,8/5,\,-4/11,\,5/17,\,\frac{\sqrt{2081}+39}{8},$ 
$\frac{\sqrt{2713}+37}{32},\,\frac{\sqrt{12121}+121}{18},$
$\frac{\sqrt{23641}+109}{98},\,\frac{\sqrt{55441}+169}{128},$ 
the corresponding curve has rank~$2$.
We would like to mention that different values of $u\in\Q$ may not necessarily
lead to different quadratic fields $\Q(\sqrt{d\mathstrut})$. For example, for $u_1=4/9$ and
$u_2=5/13$, both curves have torsion group $\Z/14\Z$ over the same quadratic field
$\Q(\sqrt{265})$. Moreover, for $u_1$ and $u_2$, the corresponding curves have the same rank, which follows from the following result 
(see~Rabarison \cite[Lem.\,4.4]{Rabarison}).

\begin{fct}\label{fct:Z/6}
Let\/ $d$ be a square-free integer and let\/ $\F_d=\Q(\sqrt{d\mathstrut})$. Furthermore, let\/
$u_0\in\F_d$ be such that\/ $\F_d=\Q(\sqrt{z})$ where\/ $z=1 - 2u_0 + u_0^2 + 4u_0^3$. 
Then the elliptic curve $$\Gamma_0:\ y^2= 4x^3 +x^2-2x+1$$ 
has torsion group\/ $\Z/6\Z$ over\/ $\F_d$ for\/ 
$d\neq -7$ and has torsion group\/ $\Z/2\Z\times\Z/6\Z$ over\/ $\F_{-7}$.
\end{fct}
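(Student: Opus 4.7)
The plan is to determine $\Gamma_0(\F_d)_{\text{tors}}$ directly, in three stages: compute the torsion over $\Q$, detect exactly when the remaining $2$-torsion becomes rational over a quadratic extension, and rule out the appearance of any higher-order torsion over any quadratic field. The hypothesis on $u_0$ plays no role in the torsion computation itself; it merely records that $(u_0,\sqrt{z})$ lies in $\Gamma_0(\F_d)\setminus\Gamma_0(\Q)$, so we are free to treat the question as one about the curve $\Gamma_0$ over arbitrary quadratic fields.

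First, I would factor $4x^3+x^2-2x+1=(x+1)(4x^2-3x+1)$, which exhibits the rational $2$-torsion point $T=(-1,0)$. A direct check shows that $P_0=(0,1)$ lies on $\Gamma_0$ and that the tangent at $P_0$ has triple contact with the cubic, so $P_0$ has order~$3$. Hence $\Z/6\Z\subseteq\Gamma_0(\Q)_{\text{tors}}$, and by Mazur's theorem together with a short Nagell--Lutz verification ruling out a rational point of order~$4$, one concludes $\Gamma_0(\Q)_{\text{tors}}=\Z/6\Z$.

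Second, the $2$-torsion grows over $\F_d$ precisely when $4x^2-3x+1$ splits over $\F_d$. Its discriminant is $9-16=-7$, so this happens if and only if $d=-7$. In that case the full $2$-torsion is $\F_{-7}$-rational and $\Z/2\Z\times\Z/6\Z\subseteq\Gamma_0(\F_{-7})_{\text{tors}}$; for $d\neq -7$ the $2$-part of $\Gamma_0(\F_d)_{\text{tors}}$ remains $\Z/2\Z$ and no new $2$-torsion appears.

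The remaining step, which I expect to be the main obstacle, is to show that no torsion of strictly larger exponent appears over any quadratic field. By the Kenku--Momose--Kamienny classification, a torsion subgroup over a quadratic field containing $\Z/6\Z$ must be one of $\Z/6\Z$, $\Z/12\Z$, or $\Z/2\Z\times\Z/6\Z$, so it suffices to exclude a point of order~$4$ over every $\F_d$. I would compute the fourth division polynomial $\psi_4$ of $\Gamma_0$, divide out the factor $\psi_2$ responsible for the $2$-torsion, and analyse the remaining quartic: one needs to verify that no root $x_4$ lies in a quadratic extension of $\Q$ together with the corresponding $y_4=\sqrt{\mathstrut 4x_4^3+x_4^2-2x_4+1}$. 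A direct computation (conveniently carried out in {\tt MAGMA}) confirms this, and combined with the previous steps it yields the claimed torsion groups.
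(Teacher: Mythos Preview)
The paper does not give its own proof of this fact; it merely cites Rabarison \cite[Lem.\,4.4]{Rabarison}. So your approach of computing the torsion directly is a genuine addition rather than a comparison, and the first two stages (factoring the cubic, identifying $(-1,0)$ and $(0,1)$, reading off the discriminant $-7$) are correct and well organised.

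There is, however, a real gap in the third stage. Your appeal to Kenku--Momose--Kamienny is misstated: the groups on their list that contain $\Z/6\Z$ are not only $\Z/6\Z$, $\Z/12\Z$, and $\Z/2\Z\times\Z/6\Z$, but also $\Z/18\Z$, $\Z/2\Z\times\Z/12\Z$, and $\Z/3\Z\times\Z/6\Z$. Your order-$4$ analysis disposes of $\Z/12\Z$ and $\Z/2\Z\times\Z/12\Z$ simultaneously, but you have said nothing about growth in the $3$-part. You still need to exclude (i) a point of order~$9$ over any quadratic field, and (ii) full $3$-torsion over $\Q(\sqrt{-3})$ (the only quadratic field where the Weil pairing would allow it). Both are short computations---factor the $3$- and $9$-division polynomials and check the fields they generate, or simply verify in {\tt MAGMA} that $\Gamma_0$ acquires no new $3$-power torsion over $\Q(\sqrt{-3})$ and that no root of $\psi_9/\psi_3$ lies in a quadratic field with its $y$-coordinate---but they must be mentioned for the argument to be complete.
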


Now, since $(u_0,z)$ is a non-torsion point on 
$\Gamma_0$, the curve $\Gamma_0$ has rank~$\geq1$ over $\F_d$, and by adding (in the case $d\neq -7$) 
the~$6$ torsion points of $\Gamma_0$ to $(u_0,z)$, we obtain the following 
$6$~values for~$u$, which all lead to essentially the same curve with 
torsion group $\Z/14\Z$ over $\F_d$\,:
$$u_1=u_0\,,\qquad
u_2=\frac{1-u_0}{1+u_0}\,,\qquad
u_{3,4}=\frac{u_0(u_0+1)\pm z}{(u_0-1)^2}\,,\qquad
u_{5,6}=\frac{(1-u_0)\pm z}{2u_0^2}\,.
$$
For example, if $u_1=4/9$, then $u_2=5/13$ and the corresponding curves 
are essentially the same. 

In order to obtain different curves with torsion group $\Z/14\Z$,
we can, for example, start with an arbitrary $u_0\in\Q\setminus\{-1,0,1\}$ and 
double the point $(u_0,z)$ on $\Gamma_0$ (over the corresponding field $\F_d$).
This way, we get the following value for~$u$:
$$u=\frac{u_0(u_0-1) (u_0^2+u_0+2)}{z^{2\mathstrut}}=
\frac{u_0(u_0-1) (u_0^2+u_0+2)}{(u_0+1) (4u_0^{2\mathstrut}-3u_0+1)}.$$

For example, taking $u_0=1/2$ (curve of rank~$0$) we produce 
a different $\Z/14\Z$ curve with $u=-11/12$ (rank~$1$) over the
same field $\Q(\sqrt{3\mathstrut})$. 
Similarly, taking $u_0=2/3$ (curve of rank~$1$) we produce a different $\Z/14\Z$ 
curve with $u=-8/15$ (rank~$0$) over the same field $\Q(\sqrt{105\mathstrut})$.

Another approach to find independent values for~$u$ would be to search for values~$d$, 
such that $\Gamma_0$ has high rank over~$\F_d$. With the help of {\tt MAGMA} we found an 
abundance of quadratic fields $\F_d$ over which the $\Z/6\Z$ curve $\Gamma_0$ has rank~$2$--$5$. 
The fields $\F_{d}$ with the smallest absolute $d$-values for the curve $\Gamma_0$ of rank~$2, 3, 4, 5$ 
are $\F_{22}, \F_{874}, \F_{-5069}, \F_{1578610}$, respectively.
Two essentially different $\Z/14\Z$ curves over $\Q(\sqrt{22\mathstrut})$ we found this way
are produced by $u_1=1/8$ and $u_2=7/4$. The former curve has rank~$0$, whereas the latter 
has rank~$1$. The mentioned $u$-values correspond to the two generators of $\Gamma_0$, which
has rank~$2$ over~$\F_{22}$.
Similarly, the four $\Z/14\Z$ curves over $\Q(\sqrt{2233\mathstrut})$ produced by $u_1=4/7$ 
(curve of rank~2), $u_2=13$ (rank~1), $u_3=1/28$ (rank~0), and $u_4=-2/11$ (rank~1) 
are all essentially different, as all the $u$-values correspond to pairwise linearly 
independent combinations of the three generators of the curve $\Gamma_0$ over $\F_{2233}$.

Let us now turn back to the search of high rank elliptic curves with 
torsion group $\Z/14\Z$. With the help of {\tt MAGMA} we first found that for $u=11/5$ 
(or equivalently for $u=-3/8$), the produced curve has rank~$3$~\,--\,~the current 
record for torsion group $\Z/14\Z$ (see Dujella~\cite{DujellaQuad}). 
The curve in Weierstrass normal form is
$$y^2 = 
x^3 - \frac{64\left(214412 \sqrt{430} - 4876337 \right)}{390625}\,x^2 -
\frac{146767085568\left(9559 \sqrt{430} - 198202 \right)}{152587890625}\,x\,,
$$
and is isomorphic to the curve
$$y^2 = 
x^3 - (214412 \sqrt{430}-4876337)\,x^2 -
12^7(9559 \sqrt{430}-198202)\,x\,
$$
with the three independent points of infinite order
\begin{align*}
	P_1 &= (85536\sqrt{430} - 1844856, 117398160\sqrt{430} - 2391265800),\\[.8ex]
	P_2 &= (-45684\sqrt{430} + 945999, 197481240\sqrt{430} - 4096319040),\\[.8ex]
	P_3 &= (150336\sqrt{430} + 5895504, 389901600\sqrt{430} + 19611195120).
\end{align*}
\indent{}Later, we found that also $u = \frac{\sqrt{-2759}-11}{32}$ produces a $\Z/14\Z$ curve of rank~$3$:
\begin{align*}
	y^2 &= x^3 -(5327056844923892\sqrt{-2759} + 151212615362621956)\,x^2 \\
	%
		&+6525845768\,(236459153187683150165981\sqrt{-2759} - 27186277677196768482611999)\,x
\end{align*}
with the three generators
	\begin{align*}
		P_1 = (&3390432200076922\sqrt{-2759} + 362094129708044162,\\
		&4074194213434761922471680\sqrt{-2759} + 34672137787509115316417280),\\[.8ex]
		P_2 = (&2744055515797882\sqrt{-2759} + 421484961222088322,\\
		&3344617042430565258489600\sqrt{-2759} + 57335992491288398889081600),\\[.8ex]
		P_3 = (&29793656566415482\sqrt{-2759} - 1346439443735230078,\\
		-&17912501237343415639622400\sqrt{-2759} -		2714013162586144875488198400).
	\end{align*}

Another parametrisation of elliptic curves over a quadratic field
with torsion group $\Z/14\Z$ is given by Rabarison~\cite[Sec.\,4.2]{Rabarison}. 
The defining polynomial of the quadratic field is $w^2 + w u + w = u^3 -u$, 
which leads to $$w_{1,2}=\frac{-(u+1)\pm\sqrt{1-2u +u^2+4u^3}}{2}\,.$$
The parametrised curve is 
$$E_{\ta,\tb}:\ y^2 +\ta xy +\tb y=x^3+\tb x^2$$
with
\begin{align*}
\ta&=\frac{u^4-u^3w+u^2(2 w-4)-uw+1}
{(u+1)(u^3-2u^2-u+1)}\,,\\[1.4ex]
\tb&=\frac{u(1-u)\left(u^5 - u^4 - 2 u^3 w + u^2 + u(2w-1) - w\right)}{(u+1)^2(u^3-2u^2-u+1)^2}\,,
\end{align*}
where the point $(0,0)$ is a point of order~$14$.

Like Kubert's parametrisation for elliptic curves with torsion group $\Z/10\Z$ or $\Z/12\Z$, 
Rabarison's parametrisation for elliptic curves with torsion group $\Z/14\Z$ can be
transformed to our parametrisation. For this, notice first that $a$ and $b$ depend on $u$ and
$v$. Now, for $$v=\frac{u (1 - 4 u - u^2 + 2 z)}{(u - 1)^2}$$ 
we obtain expressions for $a$ and $b$ which just depend on $u$ and $z$.
On the other hand, if we set $$v=-\frac{u (u^2 + 2 u - 3 - 4 w )}{(u - 1)^2}\,,$$ then,
for $$w=\frac{-(u+1)+z}{2}\,,$$ we obtain 
exactly the same expressions for $a$ and $b$ (also depending just on $u$ and $z$).
A similar result we get for $\ta$ and $\tb$ by setting
$$w=\frac{u (u^2 + 2 u - 3) + v (u - 1)^2}{4 u}\,.$$

With respect to Rabarison's parametrisation, the curve given above with rank~$3$ 
obtained by $u=11/5$~is 
\begin{multline*}
{\ }\hspace{5ex}y^2 +\frac{-15\,835 + 792 \sqrt{430}}{1\,160}\,xy
+\frac{165 (-46\,394 + 2\,237 \sqrt{430})}{26\,912}\,y\\[2ex]
=\ x^3 + \frac{165 (-46\,394 + 2\,237 \sqrt{430})}{26\,912}\,x^2,\hspace{5ex}{\ }
\end{multline*}

where three independent points of infinite order are:
\begin{align*}
P_1 &= \biggl(
\frac{85983835575 - 4146565170 \sqrt{430}}{39891856},\\
&\qquad \frac{27225 (2574034836965503 - 124130941794256 \sqrt{430})}{423791610918272}
\biggr),\\[1.7ex]
P_2 &= \biggl(
\frac{2298171927997 - 110831541546 \sqrt{430}}{1777510688},\\
&\qquad \frac{121 (69150183657283025 - 3334717851516528\sqrt{430})}{105982297261312}
\biggr),\\[1.7ex]
P_3 &= \biggl(
\frac{-36778174426785 + 1824475255680 \sqrt{430}}{15921363350048},\\
&\qquad \frac{1089 (1274259861468796925 - 61481356412574002 \sqrt{430})}{89843234417066460928}
\biggr).
\end{align*} 

\subsection{A Normal\;Form for Elliptic\;Curves with \hbox{Torsion\;Group\,$\boldsymbol{\Z/14\Z}$}}

For $u\in\Q$, $d:=1 - 2 u + u^2 + 4 u^3$, $z:=\sqrt{d}$, and $$v:=
\frac{u\left(1 - 4 u - u^2 \pm 2\sqrt{\mathstrut 1 - 2 u + u^2 + 4 u^3}\right)}{(u-1)^2}\,,$$ 
the elliptic curve 
$$\C_{u,v}:\ y^2=(u^2-1)^2 (v^2-1)\cdot \frac 1x\;+\;(3 u^2 + 
2 u^3 v + v^2 -2)\;+\;(u^2-1)\cdot x$$
over the quadratic field $\Q(\sqrt{d})$ has torsion group $\Z/14\Z$. 
Notice that $\C_{u,v}$ has two points at infinity, namely $(0,1,0)$, which is
the point of order~$1$, and $(1,0,0)$, which is the point of order~$2$.
The finite torsion points of $\C_{u,v}$ are given in the following table.

\begin{center}
\begin{tabular}{ccc}
Order & $x$-coordinate & $y$-coordinate\\
\hline
$14$\LARGE{\mathstrut} & $u^2-1$ & $\pm u(u+v)$\\[3ex]
$7$ & $v^2-1$ & $\pm u(u+v)$\\[3ex]
$14$ & $-(u-1)(v-1)$ & $\pm(u+v)$\\[3ex]
$7$ & $-(u+1)(v+1)$ & $\pm(u+v)$\\[3ex]
$14$ & $\displaystyle -\frac{(u+1)^2 (v-1)}{(u-1)^{\mathstrut}}$ & $\pm (3u-v)$\\[3ex]
$7$ & $\displaystyle -\frac{(u-1)^2 (v+1)}{(u+1)^{\mathstrut}}$ & $\pm (3u-v)$
\end{tabular}
\end{center}
As a matter of fact we would like to mention that for
$$a:=(u^2-1)^2 (v^2-1)\,,\quad
b:=3 u^2 + 2 u^3 v + v^2 -2\,,\quad 
c:=(u^2-1)\,,$$
if $(x_0,y_0)$ is a point on $\C_{u,v}$, then 
$\left(a/x_0,\,a y_0/x_0\right)$ 
is a point on $$\C_{b,\hspace{1pt}ac}:\ y^2=x^3\,+\,b\,x^2\,+\,ac\,x\,,$$
where the point at infinity $(1,0,0)$ is moved to $(0,0)$.


\section{Elliptic Curves with Torsion Group $\boldsymbol{\Z/16\Z}$}

In this section we use our geometric approach to construct a 
parametrisation of elliptic curves with torsion group $\Z/16\Z$.


\begin{thm}\label{thm:mainZ/16}
 Let\/ $\F$ be a field containing\/ $\Q$. Then there exists an
elliptic curve\/ $\Cab$ over\/ $\F$ with torsion group\/ $\Z/16\Z$ if and only if
for some\/ $\alpha\in\F\setminus\{-1,0,1\}$, 
$$\alpha\sqrt{1-\alpha^{\mathstrut 2}}\pm\sqrt{\alpha\,(\alpha^2-1)\Bigl(1+\sqrt{1-\alpha^2}-
\alpha\bigl(1 + \alpha +\sqrt{1-\alpha^2}\,\bigr)\Bigr)}$$
or
$$\alpha\sqrt{1-\alpha^{\mathstrut 2}}\pm\sqrt{\alpha\,(\alpha^2-1)\Bigl(1-\sqrt{1-\alpha^2}-
\alpha\bigl(1 + \alpha-\sqrt{1 - \alpha^2}\,\bigr)\Bigr)}$$
belongs to\/ $\F$.
\end{thm}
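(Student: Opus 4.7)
The plan is to follow the template established in the proof for $\Z/14\Z$. Start from the cubic $\Cabcd$ of Lemma \ref{lem:trans} with the normalised points $A=(1,1)$, $\konj{A}=(-1,-1)$, $B=(v,u)$, and $-\konj{B}=(u,u)$, arranged so that $A\leftrightarrow 1$, $B\leftrightarrow 2$, $T\leftrightarrow 8$ in $\Z/16\Z$, with $\konj{X}=T+X$ throughout. Using the chord-and-tangent construction in homogeneous coordinates, compute $C=A+B\leftrightarrow 3$ and $D=2A+B\leftrightarrow 4$ as explicit rational expressions in $u,v$, and also compute $\konj{D}$ via Fact \ref{fct:konj}.

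Next, read off the collinearity relations forced by arithmetic in $\Z/16\Z$. Because $B+\konj{B}$ corresponds to $2+10=12\equiv -4$, the line $B\konj{B}$ meets $\Cabcd$ at $D$; because $2B$ corresponds to $4=D$, the tangent at $B$ meets $\Cabcd$ at $-D$ (this is the $\dritt{B}{B}=-D$ relation of Fact \ref{fct:tangent}); and because $\konj{D}$ and $-D$ both correspond to $12$, they coincide, imposing $\konj{D}=-D$. Each of these translates, via the usual scalar-product computation $\langle g,P\rangle=0$, into a polynomial equation in $u$ and $v$.

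The strategy is to solve for $u,v$ by peeling off one square root at a time. As in the $\Z/14\Z$ case, one of the equations is quadratic in $v$, so $v$ is given in terms of $u$ by a square root $\sqrt{\Delta_1(u)}$, which after a suitable birational substitution $u\mapsto\alpha$ is brought to the clean form $\sqrt{1-\alpha^2}$. Substituting back into the remaining equation produces a second quadratic, this time in a new auxiliary variable, whose discriminant $\Delta_2(\alpha,\sqrt{1-\alpha^2})$ inherits the first square root and gives, after clearing common factors, the outer radical $\sqrt{\alpha(\alpha^2-1)(1\pm\sqrt{1-\alpha^2}-\alpha(1+\alpha\pm\sqrt{1-\alpha^2}))}$ of the statement. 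The two inner $\pm$ signs come from the two roots of the first quadratic, and the outer $\pm$ from the two roots of the second. The converse direction consists in reversing the process: given $\alpha\in\F\setminus\{-1,0,1\}$ with the nested radical in $\F$, recover $u,v\in\F$, form $a=-2+3u^2+2u^3v+v^2$ and $b=(u^2-1)^3(v^2-1)$, and verify that the resulting $\Cab$ realises each of the imposed collinearities, so that $A$ on $\Cabcd$ has order exactly $16$ on $\Cab$.

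The main obstacle is finding the birational substitution $u\mapsto\alpha$ that symmetrises the first square root to $\sqrt{1-\alpha^2}$ and makes the second discriminant factor in precisely the form shown. A direct attack produces unwieldy expressions; discovering the clean parameter $\alpha$ (presumably a simple rational function of $u$ and $\sqrt{\Delta_1(u)}$) is the key insight, with the rest being routine but lengthy symbolic manipulation best handled with a computer algebra system.
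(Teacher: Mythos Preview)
Your approach diverges from the paper's in the choice of normalisation, and that difference is exactly the source of your ``main obstacle''. You take $A\leftrightarrow 1$ (so $A$ has order~$16$) and $B=2A\leftrightarrow 2$; the paper instead takes $A\leftrightarrow 2$ (so $A$ has order~$8$), puts $D\leftrightarrow 4$ on the line $A\konj{A}$, and lets $B\leftrightarrow 7$ be the unknown order-$16$ point. With that choice, the conditions $\dritt{D}{D}=T$ and $\konj{D}=-D$ force $\beta=1$, $\delta=-(\alpha+\gamma)$ and $\gamma=(1-\alpha^2)/(2\alpha)$ directly in terms of the \emph{curve coefficient}~$\alpha$ of $\Cabcd$. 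So the $\alpha$ in the statement \emph{is} the parameter of $\Cabcd$; no birational substitution $u\mapsto\alpha$ is ever made. The nested radical then drops out of a single quartic in the $x$-coordinate~$x_1$ of~$B$, obtained by combining ``$B$ lies on $\Cabcd$'' with ``$\konj{A}$ lies on the line $B\konj{B}$'' (the latter because $\dritt{B}{\konj B}=\konj{A}$).

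There is also a genuine gap in your plan as written. The three collinearities you propose are not independent: once $B=2A$ and $D=4A$ are computed from the group law, $\dritt{B}{B}=-D$ holds identically, and $\dritt{B}{\konj B}=D$ is equivalent to $\konj{D}=-D$ (both assert $8A=T$). You therefore have \emph{one} polynomial constraint on $(u,v)$, not two, and there is no ``remaining equation'' into which to substitute after the first square root. That single constraint cuts out a genus-$2$ curve, so it will not be quadratic in~$v$, and the sequential peeling you describe cannot proceed. What the paper's renormalisation to the order-$8$ point achieves is precisely a rational model in which that genus-$2$ locus is fibred over the $\alpha$-line, with only the halving step---a quartic in one new variable $x_1$---left to solve.
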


\begin{proof}
Assume that the curve $\Cabcd$ over $\F$ has torsion group $\Z/16\Z$ and that
for the points $A$ and $\konj{A}$, which
correspond to~$2$ and $10$, respectively, we have $A=(1,1)$ and $\konj{A}=(-1,-1)$. 
Furthermore, let $B$ and $D$ be points on
$\Cabcd$ which correspond to $7$ and $4$, respectively. Then,
the group table with respect to these points, their inverses and their conjugates,
is given by the following table:

\begin{center}
\begin{tabular}{l p{.7cm} p{.7cm} p{.7cm} p{.7cm} p{.7cm} p{.7cm} p{.7cm} p{.7cm}  p{.7cm}}
Elements of $\Z/16\Z${\Large${\mathstrut}$}&\hfill$0$\hfill&\hfill$1$\hfill&\hfill$2$\hfill&\hfill$3$\hfill&
\hfill$4$\hfill&\hfill$5$\hfill&\hfill$6$\hfill&\hfill$7$\hfill&\hfill$8$\hfill\\\hline
Points on $\Cabcd${\Large${\mathstrut}$}&\hfill$\nO$\hfill&\hfill$-\konj{B}$\hfill& 
\hfill$A$\hfill&\hfill$\ $\hfill&\hfill$D$\hfill&\hfill$\ $\hfill&\hfill$-\konj{A}$\hfill
&\hfill$B$\hfill&\hfill$T$\hfill\\[3ex]
Points on $\Cabcd${\Large${\mathstrut}$}
&\hfill$\nO$\hfill&\hfill$\konj{B}$\hfill& 
\hfill$-A$\hfill&\hfill$\ $\hfill&\hfill$\konj{D}$\hfill&\hfill$\ $\hfill&\hfill$\konj{A}$\hfill&
\hfill$-B$\hfill&\hfill$T$\hfill\\\hline
Elements of $\Z/16\Z${\Large${\mathstrut}$}
&\hfill$16$\hfill&\hfill$15$\hfill&\hfill$14$\hfill&\hfill$13$\hfill&
\hfill$12$\hfill&\hfill$11$\hfill&\hfill$10$\hfill&\hfill$9$\hfill&\hfill$8$\hfill
\end{tabular}
\end{center}
Because $A$ and $\konj{A}$ are on $\Cabcd$, we have $\beta=1$ and $\delta=-(\alpha+\gamma)$.
Now, since $\dritt A{\konj{A}}=D$, the point $D$ is on the line passing through $A$ and $\konj{A}$,
and therefore, $D=(x_0,x_0)$ for some $x_0\in\F$. Since $\dritt DD=T$, this implies that the tangent
to the curve $\Cabcd$ with contact point~$D$ is parallel to the $x$-axis, which gives us
\begin{eqnarray}\label{eq:x0}
x_0=-(\alpha+\gamma)\pm\sqrt{(\alpha+\gamma)^2-1}\,.
\end{eqnarray}
Furthermore, we have $-D=\konj{D}$, which implies that 
for $\konj{D}=(\konj{x},-x_0)$ we have $x_0=\konj{x}_0$,
where by {\FCT}\;\ref{fct:konj},
$$\konj{x}_0=\frac{\alpha+\delta(x_0\gamma+\beta)}{\gamma(x_0-\delta)}.$$
Thus, by~(\ref{eq:x0}) we obtain $$\gamma=\frac{1-\alpha^2}{2\alpha}\,,$$
and for a point $P=(x,y)$ on $\Cabcd$, the $x$-coordinate of $\konj{P}$ is
$$\konj{x}=-\frac{x+2\alpha+x\alpha^2}{1+2x\alpha+\alpha^2}.$$

Now, let us consider the points $B$ and $\konj{B}$.
Since $\dritt B\konj{B}=\konj{A}$, the point $\konj{A}$ is on the line $g$ passing 
through $B$ and $\konj{B}$. Let $\lambda$ be the slope of $g$, 
then $$g(x)=\lambda x+(\lambda-1)\,.$$ Because the line $g$ passes through the 
the points $B=(x_1,y_1)$ and $\konj{B}=(\konj{x}_1,-y_1)$, we must 
have $g(x_1)=-g(\konj{x}_1)$, and solving this equation for $\lambda$ gives
$$\lambda_0 =\frac{1+2x_1\alpha+\alpha^2}{1+2x_1\alpha+\alpha^2+\alpha(x_1^2-1)}\,.$$
Furthermore, we must have that the points $B$ and $\konj{B}$ are on $\Cabcd$, {\ie},
$$g_{\lambda_0}(x_1)^2=\bigl(\lambda_0 x_1+(\lambda_0-1)\bigr)^2 = 
\frac{\alpha+\beta x_1+\gamma x_1^2}{x_1-\delta}=
\frac{x^2 + 2 x \alpha - \alpha^2(x^2-2)}{1 + 
 2 x \alpha + \alpha^2}\,,$$
which finally gives us the following four values for $x_1$:
$$
-1-\frac{\sqrt{1-\alpha^{\mathstrut 2}}}{1+\alpha}\pm
\frac{\sqrt{\alpha\,(\alpha^{\mathstrut 2}-1)\Bigl(1+\sqrt{1-\alpha^{\mathstrut 2}}-
\alpha\bigl(1 + \alpha +\sqrt{1-\alpha^{\mathstrut 2}}\,\bigr)\Bigr)}}{\alpha(1+\alpha)}
$$
$$
-1+\frac{\sqrt{1-\alpha^{\mathstrut 2}}}{1+\alpha}\pm
\frac{\sqrt{\alpha\,(\alpha^{\mathstrut 2}-1)\Bigl(1-\sqrt{1-\alpha^{\mathstrut 2}}-
\alpha\bigl(1 + \alpha -\sqrt{1-\alpha^{\mathstrut 2}}\,\bigr)\Bigr)}}{\alpha(1+\alpha)}
$$
Since $x_1$ and $\alpha$ belong to $\F$, this implies that at least one of
$$z_{1,3}=\alpha\sqrt{1-\alpha^{\mathstrut 2}}\pm\sqrt{\alpha\,(\alpha^2-1)\Bigl(1+\sqrt{1-\alpha^2}-
\alpha\bigl(1 + \alpha +\sqrt{1-\alpha^2}\,\bigr)\Bigr)}$$
and 
$$z_{2,4}=\alpha\sqrt{1-\alpha^{\mathstrut 2}}\pm\sqrt{\alpha\,(\alpha^2-1)\Bigl(1-\sqrt{1-\alpha^2}-
\alpha\bigl(1 + \alpha-\sqrt{1 - \alpha^2}\,\bigr)\Bigr)}$$
belongs to~$\F$. On the other hand, if at least one of $z_1,z_2,z_3,z_4$ belongs to~$\F$, 
then also the corresponding~$x_1$ belongs to~$\F$, which shows that there exists an elliptic curve 
$\Cab$ with torsion group $\Z/16\Z$ over~$\F$.
\end{proof}

As a consequence of {\THM}\;\ref{thm:mainZ/16} we obtain
\begin{cor}
Let\/ $m\in\Q\setminus\{-1,0,1\}$ and let
\begin{align*}
d_1 &=(m^4 - 1)(m^2 - 2 m - 1)\\[.8ex]
d_2 &= m (m^2 + 1) (m^2 + 2 m - 1) 
\end{align*}
Then for each\/ $i\in\{1,2\}$ there is an elliptic curve 
over\/ $\Q(\sqrt{d_i\mathstrut})$ with torsion group~$\Z/16\Z$.
\end{cor}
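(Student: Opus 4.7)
The plan is to invoke {\THM}\;\ref{thm:mainZ/16} by exhibiting, for each $i\in\{1,2\}$, a rational $\alpha\in\Q\setminus\{-1,0,1\}$ for which one of the two expressions listed in that theorem lies in $\Q(\sqrt{d_i\mathstrut})$. The guiding idea is to pick $\alpha$ via a Pythagorean parametrisation of $\alpha^2+\beta^2=1$ with $\beta\in\Q$, so that the inner radical $\sqrt{1-\alpha^2}$ is already rational. The nested radical then collapses and the whole condition reduces to showing that the outer radicand is a rational square multiple of $d_i$.

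For $d_2 = m(m^2+1)(m^2+2m-1)$, take $\alpha = \dfrac{2m}{m^2+1}$, so that $\sqrt{1-\alpha^{\mathstrut 2}} = \dfrac{m^2-1}{m^2+1}\in\Q$. Substituting this into the radicand of the second expression in {\THM}\;\ref{thm:mainZ/16} and simplifying, one should obtain
\[
\alpha(\alpha^2-1)\Bigl(1-\sqrt{1-\alpha^{\mathstrut 2}} - \alpha\bigl(1 + \alpha - \sqrt{1-\alpha^{\mathstrut 2}}\bigr)\Bigr)
= \frac{4m(m^2-1)^2(m^2+2m-1)}{(m^2+1)^5}.
\]
Dividing the right-hand side by $d_2$ yields $\bigl(2(m^2-1)/(m^2+1)^3\bigr)^2$, so the radicand is a rational square times $d_2$. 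Consequently the outer square root, and hence the whole expression, lies in $\Q(\sqrt{d_2\mathstrut})$, and {\THM}\;\ref{thm:mainZ/16} delivers an elliptic curve with torsion group $\Z/16\Z$ over that field.

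For $d_1 = (m^2-1)(m^2+1)(m^2-2m-1)$, swap the two Pythagorean legs: take $\alpha = \dfrac{m^2-1}{m^2+1}$, so that $\sqrt{1-\alpha^{\mathstrut 2}} = \dfrac{2m}{m^2+1}\in\Q$. A parallel computation for the first expression in {\THM}\;\ref{thm:mainZ/16} should give outer radicand
\[
\alpha(\alpha^2-1)\Bigl(1+\sqrt{1-\alpha^{\mathstrut 2}} - \alpha\bigl(1 + \alpha + \sqrt{1-\alpha^{\mathstrut 2}}\bigr)\Bigr)
= \frac{4m^2(m-1)(m+1)^3(m^2-2m-1)}{(m^2+1)^5},
\]
whose quotient by $d_1$ is $\bigl(2m(m+1)/(m^2+1)^3\bigr)^2$. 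The same reasoning then places the first expression in $\Q(\sqrt{d_1\mathstrut})$.

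Both chosen values of $\alpha$ clearly lie in $\Q\setminus\{-1,0,1\}$ whenever $m\in\Q\setminus\{-1,0,1\}$, so {\THM}\;\ref{thm:mainZ/16} applies in each case. The only work involved is verifying the two displayed polynomial identities for the outer radicand; this is conceptually routine but algebraically heavy, so a computer-algebra check is advisable. The only real ``insight'' is spotting that the two natural Pythagorean parametrisations $\alpha=2m/(m^2+1)$ and $\alpha=(m^2-1)/(m^2+1)$ are precisely what matches up, respectively, with the second and first expression in {\THM}\;\ref{thm:mainZ/16} and with the fields $\Q(\sqrt{d_2\mathstrut})$ and $\Q(\sqrt{d_1\mathstrut})$.
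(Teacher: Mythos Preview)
Your argument is correct and follows the same route as the paper: choose $\alpha$ by the standard Pythagorean parametrisation so that $\sqrt{1-\alpha^2}$ is rational, then check that the remaining (outer) radicand lies in the appropriate quadratic field. The paper simply asserts that substituting $\alpha_1=(m^2-1)/(m^2+1)$ into $z_1$ and $\alpha_2=2m/(m^2+1)$ into $z_2$ gives elements of $\Q(\sqrt{d_1})$ and $\Q(\sqrt{d_2})$; you have made those substitutions explicit and verified the factorisations, which is a welcome addition rather than a different approach.
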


\begin{proof} If $\alpha\in\Q$ is such that $\sqrt{1-\alpha^{\mathstrut 2}}\in\Q$,
then $\alpha=\alpha_1$ or $\alpha=\alpha_2$, where
$$\alpha_1=\frac{m^2 - 1}{m^2 + 1}\qquad\text{or}\qquad\alpha_2=\frac{2m}{m^2 + 1}\,.$$
Let $z_1,z_2$ be as above. If we substitute $\alpha$ by $\alpha_1$ into $z_1$, then
$z_1\in\Q(\sqrt{d_1})$, and if we substitute $\alpha$ by $\alpha_2$ into $z_2$, then
$z_2\in\Q(\sqrt{d_2})$. 
\end{proof}

For each~$m_1\neq \pm 1$, let 
$m_2:=1/m_1$, $m_3:=(m_1-1)/(m_1+1)$, $m_4:=1/m_3$, 
and for $j\in\{1,2,3,4\}$ let \hbox{$m_{-j}:=-m_j$.} 
Furthermore, for $i\in\{1,2\}$ and $k\in\{\pm 1,\pm 2,
\pm 3,\pm 4\}$ let $d_{i,k}$ be the value of $d_i$ obtained from $m_k$.
Now, for each value~$m_1\neq \pm 1$ we obtain four groups of four pairwise isomorphic 
elliptic curves with torsion group $\Z/16\Z$ over the same quadratic 
field $\Q(\sqrt{d\mathstrut})$. The four groups are
given by the following pairs~$(m_k,d_{i,k})$:
	$$\begin{array}{lllll}
		\text{over quadratic field}\hfill\Q(\sqrt{d_{1,1}\mathstrut})\hfill{:}
		&(m_1,\,d_{1,1})&(m_{-2},\,d_{1,-2})&(m_3,\,d_{2,3})&(m_{-4},\,d_{2,-4})\\[.8ex]
		\text{over quadratic field}\hfill\Q(\sqrt{d_{2,1}\mathstrut})\hfill{:}
		&(m_1,\,d_{2,1})&(m_{-2},\,d_{2,-2})&(m_3,\,d_{1,3})&(m_{-4},\,d_{1,-4})\\[.8ex]
		\text{over quadratic field}\ \Q(\sqrt{d_{1,-1}\mathstrut}):
		&(m_{-1},\,d_{1,-1})&(m_{2},\,d_{1,2})&(m_{-3},\,d_{2,-3})&(m_{4},\,d_{2,4})\\[.8ex]
		\text{over quadratic field}\ \Q(\sqrt{d_{2,-1}\mathstrut}):
		&(m_{-1},\,d_{2,-1})&(m_{2},\,d_{2,2})&(m_{-3},\,d_{1,-3})&(m_{4},\,d_{1,4})
	\end{array}$$
%

\noindent For example, over $\Q(\sqrt{d_1\mathstrut})$ we obtain the parametrisation
\begin{equation}\label{Z16d1}
y^2=x^3+\left((m^4 - 1)^2 - 4 m^2 (m^4 + 1)\right)x^2+\,16 m^8\,x\,,
\end{equation}
or equivalently
\begin{equation}
y^2+\left(\frac{m^4+2m^2-1}{m^2}\right)xy+(m^4-1)\,y=x^3+(m^2 - 1) \,x^2\,,
\end{equation}
and over $\Q(\sqrt{d_2\mathstrut})$, with respect to another normal form,
we obtain the parametrisation
\begin{equation}
y^2 + (1-c)\,xy -b\,y=x^3-b\,x^2$$
where $$b=-\frac{m(m-1)^2}{(m^2+1)^2}\qquad\text{and}\qquad 
c=-\frac{2m(m-1)^2}{(m^2+1)(m+1)^2}.
\end{equation}

\subsection{High Rank Elliptic Curves with Torsion Group $\boldsymbol{\Z/16\Z}$}
In~\cite[p.\,38]{Rabarison}, Rabarison listed a single $\Z/16\Z$ curve of 
rank~$1$ over $\Q(\sqrt{10})$, and in~\cite{ADJBP}, an example of a curve of rank~$2$ over 
$\Q(\sqrt{1\,785})$ is provided. In~\cite{Na}, 
Najman used the $2$-isogeny method to construct a $\Z/16\Z$ curve over $\Q(\sqrt{34\,720\,105})$ 
of proven rank~$3$ and conditional rank~$4$, starting from a rank-$3$ curve with the 
torsion group $\Z/2\Z\times\Z/8\Z$. The three curves mentioned above can be 
reproduced by using formula\;(\ref{Z16d1}) for $m=3$, $m=4$, and $m=12/17$, respectively.

As~\cite{DujellaQuad} lists only the smallest, by magnitude, 
$d$-values for quadratic fields $\Q(\sqrt{d})$, the third author has shown that a 
$\Z/16\Z$ curve of conditional rank~$4$ can be built over 
$\Q(\sqrt{17\,381\,446})$ by using $m=29/65$ in formula\;(\ref{Z16d1}). This 
is a current record for $\Z/16\Z$ curves of conditional rank~$4$.

By implementing the $2$-isogeny method~\cite{Na}, we have 
found new $\Z/16\Z$ curves of rank~$3$ for $m=5/8$, $m=3/13$, and $m=13/3$ over 
$\Q(\sqrt{413\,049})$, $\Q(\sqrt{105\,910})$, and $\Q(\sqrt{36\,490})$, 
respectively. For $m=3/13$, it required a point search performed by {\tt MAGMA} 
up to height $10^{16}$ on the $8$-coverings of the quadratic 
twist by~$d=105\,910$.  

By using $m=-9$ in formula\;(\ref{Z16d1}), we found a new $\Z/16\Z$ curve over 
$\Q(\sqrt{205})$ isomorphic to 
$$y^2 = x^3 + 10226878\,x^2 + 43046721\,x\,,$$
that ties a current record for $\Z/16\Z$ curves of rank~$3$, 
with the three generators
\begin{align*}
	P_1&= (4961, -1108800\,\sqrt{205}),\\
	P_2&= (67081/81, -1935672760/729),\\
	P_3&= (279524961, -332320219200\,\sqrt{205}).
\end{align*}

Uncovering generators on the $d$-twists to prove rank~$4$ unconditionally for $\Z/16\Z$ 
curves over $\Q(\sqrt{d})$ remains a challenge. The only {\tt MAGMA} calculation that 
has not resulted in a crash corresponds to the curve with $m=12/17$. 
After successfully performing both $8$-descent and $3$-descent in $68$~hours, 
no generator was found on the quadratic twist.
\bigskip

\noindent {\bf Acknowledgement.} We would like to thank Andrej Dujella for 
many fruitful hints. We appreciate Zev Klagsbrun's expertise 
that resolved isogeneity challenges for $\Z/14\Z$ curves, and his efforts to 
uncover the last generator in the mentioned $\Z/10\Z$ case. 
We commend Yusuf AttarBashi for sieving promising candidates in $\Z/10\Z$ 
and $\Z/12\Z$ torsion groups, and discovering many high-rank 
exemplars in the process.


\vspace{-2ex}
\bibliographystyle{plain}

\begin{thebibliography}{10}

\bibitem{ADJBP}
Juli\'{a}n Aguirre, Andrej Dujella, Mirela Juki\'{c}~Bokun, and Juan~Carlos
  Peral.
\newblock {High rank elliptic curves with prescribed torsion group over
  quadratic fields}.
\newblock {\em {Periodica Mathematica Hungarica. Journal of the J\'{a}nos
  Bolyai Mathematical Society}}, 68(2):222--230, 2014.

\bibitem{dujella-web}
Andrej Dujella.
\newblock High rank elliptic curves with prescribed torsion.
\newblock \url{https://web.math.pmf.unizg.hr/~duje/tors/tors.html}.
\newblock Accessed: 2021-11-07.

\bibitem{DujellaQuad}
Andrej Dujella.
\newblock High rank elliptic curves with prescribed torsion over quadratic
  fields.
\newblock \url{https://web.math.pmf.unizg.hr/~duje/tors/torsquad.html}.
\newblock Accessed: 2021-11-07.

\bibitem{DujellaBook}
Andrej Dujella.
\newblock {\em {Number Theory}}.
\newblock Manualia Universitatis Studiorum Zagrabiensis, Zagreb, 2021.

\bibitem{HHSchroeter}
Lorenz Halbeisen and Norbert Hungerb\"uhler.
\newblock {Constructing cubic curves with involutions}.
\newblock {\em {Beitr\"age zur Algebra und Geometrie\,/\,Contributions to
  Algebra and Geometry}}, (to appear).

\bibitem{Kubert}
Daniel~Sion Kubert.
\newblock {Universal bounds on the torsion of elliptic curves}.
\newblock {\em {Proceedings of the London Mathematical Society~(3)}},
  33(2):193--237, 1976.

\bibitem{Kulesz_family}
Leopoldo Kulesz.
\newblock Courbes elliptiques de rang \'elev\'e, poss\'edant un sous-groupe de
  torsion non trivial sur {$\Bbb Q$}.
\newblock (article provided by Andrej Dujella).

\bibitem{Kulesz}
Leopoldo Kulesz.
\newblock Families of elliptic curves of high rank with nontrivial torsion
  group over~{$\Bbb Q$}.
\newblock {\em Acta Arithmetica}, 108(4):339--356, 2003.

\bibitem{Montgomery}
Peter~L. Montgomery.
\newblock Speeding the {P}ollard and elliptic curve methods of factorization.
\newblock {\em Mathematics of Computation}, 48(177):243--264, 1987.

\bibitem{Na}
Filip Najman.
\newblock {Some rank records for elliptic curves with prescribed torsion over
  quadratic fields}.
\newblock {\em {Analele \c{S}tiin\c{t}ifice ale Universit\u{a}\c{t}ii
  ``Ovidius'' Constan\c{t}a. Seria Matematic\u{a}. Mathematical Journal of the
  Ovidius University of Constantza}}, 22:215--219, 2014.

\bibitem{Rabarison_family}
F.~Patrick Rabarison.
\newblock Construction of elliptic curves with high rank and large torsion
  group.
\newblock (article provided by Andrej Dujella).

\bibitem{Rabarison}
F.~Patrick Rabarison.
\newblock Structure de torsion des courbes elliptiques sur les corps
  quadratiques.
\newblock {\em Acta Arithmetica}, 144(1):17--52, 2010.

\bibitem{Schroeter}
Heinrich Schroeter.
\newblock {\em {Die Theorie der ebenen Curven dritter Ordnung}}.
\newblock B.\,G.~Teubner, Leipzig, 1888.

\bibitem{Washington}
Lawrence~C. Washington.
\newblock {\em {Elliptic Curves: Number Theory and Cryptography}}.
\newblock CRC Press, Taylor\;\&\;Francis Group, Boca Raton, 2nd edition, 2008.

\end{thebibliography}

\end{document}